\documentclass[11pt]{article}
\usepackage{amsmath}
\usepackage{amssymb}
\usepackage{amsthm}

\textwidth=16cm \textheight=23cm
\parskip=\medskipamount
\parindent=0pt
\topmargin=-1cm \oddsidemargin=0cm

\def\ad{\mathop{\mathrm{ad}}\nolimits}

\def\clift#1{#1^{\scriptscriptstyle{\mathrm{C}}}}
\def\hlift#1{#1^{\scriptscriptstyle{\mathrm{H}}}}
\def\vlift#1{#1^{\scriptscriptstyle{\mathrm{V}}}}
\def\hchecklift#1{#1^{\scriptscriptstyle{\check{\mathrm{H}}}}}

\def\C{\mathcal{C}}
\def\D{\mathcal{D}}
\def\fpd#1#2{{\displaystyle\frac{\partial #1}{\partial #2}}}
\def\H{\mathcal{H}}
\def\V{\mathcal{V}}
\def\S{\mathcal{S}}

\def\R{\mathbb{R}}

\def\vf#1{\frac{\partial}{\partial #1}}

\def\g{\mathfrak{g}}
\def\h{\mathfrak{h}}

\def\conn#1#2#3{\setbox1=\hbox{$\scriptstyle{#2}{#3}$}%
\setbox2=\hbox to\wd1{$\hfil\scriptstyle{#1}\hfil$}
\Gamma^{\!\box2}_{\!\box1}}
\def\barconn#1#2#3{\setbox1=\hbox{$\scriptstyle{#2}{#3}$}%
\setbox2=\hbox to\wd1{$\hfil\scriptstyle{#1}\hfil$}
\check{\Gamma}^{\!\box2}_{\!\box1}}

\def\onehalf{{\textstyle\frac12}}

\def\hook{{\mathchoice
{\vrule height 0pt depth 0.4pt width 3pt \vrule height 5pt depth 0.4pt
\kern 3pt}
{\vrule height 0pt depth 0.4pt width 3pt  \vrule height 5pt depth
0.4pt\kern 3pt}
{\vrule height 0pt depth 0.2pt width 1.5pt  \vrule height 3pt depth
0.2pt width 0.2pt \kern 1pt}
{\vrule height 0pt depth 0.2pt width 1.5pt  \vrule height 3pt depth
0.2pt width 0.2pt \kern 1pt} }}

\def\Langle{\langle\!\langle}
\def\Rangle{\rangle\!\rangle}

\newtheorem{prop}{Proposition}

\RequirePackage{color}
{\egroup}

\begin{document}

\title{Reduction of invariant constrained systems using anholonomic frames}

\author{M.\ Crampin and T.\ Mestdag\\
Department of Mathematics, Ghent University\\
Krijgslaan 281, S9, B--9000 Gent, Belgium}

\date{}

\maketitle

{\small {\bf Abstract.} We analyze two reduction methods for
nonholonomic systems that are invariant under the action of a Lie
group on the configuration space.  Our approach for obtaining the
reduced equations is entirely based on the observation that the
dynamics can be represented by a second-order differential equations
vector field and that in both cases the reduced dynamics can be
described by expressing that vector field in terms of an appropriately
chosen anholonomic frame.  \\[2mm]
{\bf Mathematics Subject Classification (2000).} 34A26, 37J60, 70G45,
70H03.\\[2mm]
{\bf Keywords.} Lagrangian system, nonholonomic constraints,
anholonomic frames, reduction, symmetry group}

\section{Introduction}

In a number of recently published papers
\cite{Routh,rraspects,nonholvak,invlag} we have developed a
distinctive geometric approach to the study of regular Lagrangian
dynamical systems, and especially to the problem of formulating
reduced equations for systems which are invariant under the action of
a symmetry group.  The main distinctive features of our approach are,
firstly, the formulation of the Euler-Lagrange equations in a way
which is well adapted to the idea that their function is to determine
a vector field on the velocity phase space which is of second-order
type (so that the differential equations which determine its integral
curves are of second order in the configuration space coordinates),
and yet is completely coordinate independent; and secondly, the
consistent use of anholonomic frames and their associated
quasi-velocities.  In this paper we shall extend these ideas to cover
Lagrangian systems subject to nonholonomic linear constraints, for
which both the Lagrangian function and the constraint distribution are
invariant. Such constraints arise naturally in the context of
systems with rigid bodies rolling without slipping over a surface or
possessing a contact point with the surface in the form of a knife
edge.  A classical reference for the dynamics of mechanical systems
with nonholonomic constraints is the book by Ne\u{\i}mark and Fufaev
\cite{Neimark}.  The recent books
\cite{Bloch,Cortes,Cushman} contain many references to
different modern geometric approaches to the theory. We will work with autonomous
systems; for formulations of the nonholonomic dynamics in a
time-dependent set-up see e.g.\ \cite{Olga,Willy}.

The formulation of the Euler-Lagrange equations mentioned above goes
as follows. We consider a Lagrangian system over a differentiable
manifold $Q$ (configuration space). The Lagrangian $L$ is a
function on the tangent bundle $\tau:TQ\to Q$ (velocity phase space); it is
regular if its Hessian with respect to the fibre coordinates is
nonsingular. The following proposition holds \cite{nonholvak}.

\begin{prop}\label{Prop1}
Let $L$ be a regular Lagrangian on $TQ$.  There is a unique
second-order differential equation  field $\Gamma$ such that
\[
\Gamma(\vlift{Z}(L))-\clift{Z}(L)=0
\]
for all vector fields $Z$ on $Q$. Moreover, $\Gamma$ may be determined
from the equations
\[
\Gamma(\vlift{X_i}(L))-\clift{X_i}(L)=0, \quad i=1,2,\ldots n=\dim Q,
\]
for any frame $\{X_i\}$ on $Q$ (which may be a coordinate frame or may be
anholonomic).
\end{prop}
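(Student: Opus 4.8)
The plan is to reduce everything to the observation that, once $\Gamma$ is required to be a second-order field, the map
\[
Z \longmapsto E(Z) := \Gamma(\vlift{Z}(L)) - \clift{Z}(L)
\]
is not merely $\R$-linear but $C^\infty(Q)$-linear in $Z$. Granting this, the statement follows almost at once: any frame $\{X_i\}$ is a basis for the $C^\infty(Q)$-module $\vectorfields{Q}$, so writing an arbitrary $Z = f^i X_i$ with $f^i\in C^\infty(Q)$ gives $E(Z) = (f^i\circ\tau)\,E(X_i)$, whence $E$ vanishes on all of $\vectorfields{Q}$ if and only if it vanishes on the frame. This simultaneously yields the equivalence of the two systems of equations and shows that the coordinate frame is in no way privileged. (Note that $E$ is only $C^\infty(Q)$-linear, not $C^\infty(TQ)$-linear, which is exactly why the reduction is to a frame of vector fields on $Q$.)

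First I would establish the linearity. The ingredients are the standard lift identities $\vlift{(fZ)} = (f\circ\tau)\vlift{Z}$ and $\clift{(fZ)} = (f\circ\tau)\clift{Z} + \dot f\,\vlift{Z}$, where $\dot f$ denotes the total derivative $v^j\,\partial f/\partial q^j$, together with the defining property of a second-order field that $\Gamma(f\circ\tau) = \dot f$ for every $f\in C^\infty(Q)$. A short computation then gives
\[
E(fZ) = \Gamma\big((f\circ\tau)\vlift{Z}(L)\big) - (f\circ\tau)\clift{Z}(L) - \dot f\,\vlift{Z}(L) = (f\circ\tau)\,E(Z),
\]
the two $\dot f\,\vlift{Z}(L)$ terms cancelling precisely because $\Gamma$ is of second order. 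This step is the conceptual heart of the argument; everything else is routine.

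Next I would settle existence and uniqueness by working in natural coordinates $(q^i,v^i)$ with the coordinate frame $X_i=\partial/\partial q^i$, for which $\vlift{X_i}(L) = \partial L/\partial v^i$ and $\clift{X_i}(L) = \partial L/\partial q^i$. Writing the unknown second-order field as $\Gamma = v^i\,\partial/\partial q^i + f^i\,\partial/\partial v^i$, the equations $E(X_i)=0$ become
\[
f^j\,\spd{L}{v^i}{v^j} = \fpd{L}{q^i} - v^j\,\spd{L}{q^j}{v^i}.
\]
Since $L$ is regular, the Hessian $\partial^2 L/\partial v^i\partial v^j$ is nonsingular, so this linear system has a unique solution for the $f^i$ and hence determines a unique second-order field on each chart.

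Finally I would assemble the pieces. Because the defining condition $E(Z)=0$ is intrinsic, the unique local fields agree on overlaps and patch to a globally defined second-order field $\Gamma$, which by the linearity above satisfies $E(Z)=0$ for every $Z\in\vectorfields{Q}$; this proves the first assertion. The same linearity shows that for any frame $\{X_i\}$, holonomic or anholonomic, the $n$ equations $E(X_i)=0$ are equivalent to $E(Z)=0$ for all $Z$ and therefore single out exactly this $\Gamma$; one may equally verify directly that in an anholonomic frame these equations again form an invertible linear system governed by the frame Hessian. I expect the only point needing care to be the bookkeeping in the linearity computation—tracking the $\dot f$ terms and invoking the second-order property at the right moment—rather than any genuine obstacle.
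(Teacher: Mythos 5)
Your proof is correct and takes essentially the same route that the paper relies on (the proof itself is only cited here, from \cite{nonholvak}): the key point in both is the $C^\infty(Q)$-linearity of $Z\mapsto\Gamma(\vlift{Z}(L))-\clift{Z}(L)$, forced by the cancellation of the $\dot f$ terms when $\Gamma$ is of second-order type, which reduces the condition to any frame, where the coordinate-frame equations are the Euler--Lagrange equations and regularity of the Hessian yields existence and uniqueness.
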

\noindent Here $\vlift{Z}$ and $\clift{Z}$ are respectively the
vertical lift and the complete or tangent lift of $Z$ to $TQ$ (we
refer to \cite{CP} for the most common notions on tangent bundle
geometry).  The formulation in terms of the frame $\{X_i\}$ leads
directly to Hamel's equations
\[
\Gamma\left(\fpd{L}{v^i}\right)-X_i^j\fpd{L}{q^j}+R^j_{ik}v^k\fpd{L}{v^j}=0,
\]
where $X_i=X_i^j\partial/\partial q^j$; the $v^i$ are the
quasi-velocities associated with the frame; and the coefficients
$R^k_{ij}$ are defined by $[X_i,X_j]=R_{ij}^kX_k$ and are collectively
called the object of anholonomity of the frame.  We refer the reader
to Section~\ref{form} for more details.

The equations determining the dynamics of a regular system subject to
nonholonomic linear constraints admit a rather similar formulation.
The constraints may be specified in either of two equivalent ways:\ as
a distribution $\D$ on $Q$ (the constraint distribution), or as a
submanifold $\C$ of $TQ$ (the constraint submanifold).  The two are
related as follows: $\C=\{(q,u)\in TQ:u\in \D_q\subset T_qQ\}$.  We
assume that the dimension of each $\D_q$, and equivalently the fibre
dimension of $\C_q$, is constant and equal to $m$.  A vector field
$\Gamma$ on $\C$ is said to be of second-order type if it satisfies
$\tau_{*(q,u)}\Gamma=u$ for all $(q,u)\in\C$.  A Lagrangian function
$L$ is said to be regular with respect to $\D$ if for any
local basis $\{X_\alpha\}$ of $\D$, $1\leq\alpha\leq m$, the symmetric
$m\times m$ matrix whose entries are
$\vlift{X_\alpha}(\vlift{X_\beta}(L))$ (functions on $\C$) is
nonsingular. In \cite{nonholvak} we proved the following proposition.

\begin{prop}
Let $L$ be a Lagrangian on $TQ$ which is regular with
respect to $\D$.  Then there is a unique vector field $\Gamma$ on $\C$
which is of second-order type, is tangent to $\C$, and is such that on
$\C$
\[
\Gamma(\vlift{Z}(L))-\clift{Z}(L)=0
\]
for all $Z\in\D$.  Moreover, $\Gamma$ may be determined from the
equations
\[
\Gamma(\vlift{X_\alpha}(L))-\clift{X_\alpha}(L)=0, \quad
\alpha=1,2,\ldots m,
\]
on $\C$, where $\{X_\alpha\}$ is any local basis for $\D$.
\end{prop}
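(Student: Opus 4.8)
The plan is to reduce everything to a local computation in a frame adapted to $\D$ and then to check that the defining property is independent of the choice of basis, so that the local solutions patch together. First I would extend a local basis $\{X_\a\}$ of $\D$, $1\le\a\le m$, to a full local frame $\{X_i\}=\{X_\a,X_a\}$ on $Q$, where $m<a\le n$. Writing $v^i$ for the associated quasi-velocities, the constraint submanifold is $\C=\{v^a=0\}$, coordinatized by $(q^i,v^\a)$. Since $\{\clift{X_i},\vlift{X_i}\}$ is a frame on $TQ$ and $\vlift{X_i}=\partial/\partial v^i$, the second-order type condition $\tau_*\Gamma=u=v^\a X_\a$ (using $v^a=0$ on $\C$) forces any candidate to have the form $\Gamma=v^\a\clift{X_\a}+\Phi^i\vlift{X_i}$ along $\C$, with the $\Phi^i$ functions on $\C$ still to be determined.

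Next I would impose tangency to $\C$, that is, $\Gamma(v^a)=0$ on $\C$. Using $\vlift{X_j}(v^i)=\delta^i_j$ together with the identity $\clift{X_i}(v^j)=-R^j_{ik}v^k$ (which follows from $\lie{X_i}\theta^j=-R^j_{ik}\theta^k$ for the dual coframe), a short computation gives $\Gamma(v^a)=-R^a_{\a\b}v^\a v^\b+\Phi^a$ on $\C$, so tangency pins down the transverse vertical components $\Phi^a=R^a_{\a\b}v^\a v^\b$ completely in terms of the object of anholonomity. The only remaining unknowns are the $m$ functions $\Phi^\a$. Substituting into the dynamical equations and abbreviating $g_{\a\b}=\vlift{X_\a}(\vlift{X_\b}(L))$, the system $\Gamma(\vlift{X_\b}(L))-\clift{X_\b}(L)=0$ takes the form $g_{\a\b}\Phi^\a+(\text{terms independent of }\Phi^\a)=0$; since $g_{\a\b}$ is exactly the matrix whose nonsingularity is the regularity of $L$ with respect to $\D$, this linear system has a unique solution $\Phi^\a$. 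This establishes existence and uniqueness of $\Gamma$ in the adapted frame.

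Finally I would show that the equations for a single basis force the equation for every $Z\in\D$, which simultaneously proves the ``moreover'' clause, the independence of the chosen basis, and (by patching) globality. Writing $Z=f^\a X_\a$ with $f^\a\in C^\infty(Q)$, I would use $\vlift{Z}=(f^\a\circ\tau)\vlift{X_\a}$, the complete-lift product rule $\clift{Z}=(f^\a\circ\tau)\clift{X_\a}+\dot{f}^\a\vlift{X_\a}$, and the second-order property $\Gamma(g\circ\tau)=\dot{g}$ (so that $\Gamma(f^\a\circ\tau)=\dot{f}^\a$). The two contributions in $\dot{f}^\a\vlift{X_\a}(L)$ then cancel, leaving $\Gamma(\vlift{Z}(L))-\clift{Z}(L)=(f^\a\circ\tau)\bigl(\Gamma(\vlift{X_\a}(L))-\clift{X_\a}(L)\bigr)$, which vanishes by the basis equations. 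The main obstacle is the tangency step: because the complete lifts $\clift{X_\a}$ are not tangent to $\C$, one cannot simply restrict a second-order equation field from $TQ$, and it is precisely the anholonomy coefficients $R^a_{\a\b}$ that measure the required vertical correction. Once that is handled, regularity of $L$ with respect to $\D$ does the rest.
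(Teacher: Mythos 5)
Your proof is correct and follows essentially the same route as the paper's (the proof the authors give in the cited reference, whose machinery is recalled in Section~4): an adapted anholonomic frame $\{X_\a,X_a\}$ with quasi-velocities so that $\C=\{v^a=0\}$, the second-order form $\Gamma=v^\a\clift{X_\a}+\Phi^i\vlift{X_i}$, nonsingularity of $g_{\a\b}=\vlift{X_\a}(\vlift{X_\b}(L))$ to solve uniquely for the $\Phi^\a$, and the cancellation of the $\dot{f}^\a\vlift{X_\a}(L)$ terms to get the equation for all $Z\in\D$, hence basis-independence and patching. One remark: the ``main obstacle'' you single out is actually illusory --- since $R^a_{\a\b}$ is skew-symmetric in $\a,\b$ while $v^\a v^\b$ is symmetric, your correction term $\Phi^a=R^a_{\a\b}v^\a v^\b$ vanishes identically on $\C$, so every field of the form $v^\a\clift{X_\a}+\Phi^\a\vlift{X_\a}$ is automatically tangent to $\C$; this is exactly why the paper (Section~4.2) writes the constrained second-order field as $\Gamma=v^\a\clift{X_\a}+f^\a\vlift{X_\a}$ with no $\vlift{X_a}$ components at all. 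Your argument is unaffected, since carrying the (unrecognized) zero through the linear system changes nothing.
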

This is our version of the Lagrange-d'Alembert principle (see
\cite{Bloch,Cortes} for other versions); the vector field $\Gamma$ is
the dynamical field of the constrained system.

The formal similarity between the standard Euler-Lagrange equations and the
Lagrange-d'Alembert equations in these formulations is self-evident.
We shall exploit this similarity in deriving the reduced equations
for an invariant constrained system:\ as we shall show, to obtain
those equations it is enough to follow the reduction procedure for an
invariant unconstrained system, while restricting attention to the
constraint submanifold.

There are in fact two well-known ways of reducing the equations of an
invariant unconstrained Lagrangian system.  One method does not even
take the Lagrangian structure of the system into account and simply
involves factoring out by the action of the group, and leads to the
so-called Lagrange-Poincar\'{e} equations; this is described in e.g.\
\cite{Cendra,DMM, invlag}.  The second does take advantage of momentum
conservation; its first step is to restrict to a level set of
momentum, and this is followed by a reduction with respect to the
invariance group of the chosen value of the momentum.  This is a
generalized version of Routh's procedure; it is discussed in
\cite{Routh,MRS}.  For a constrained system, however, these two
methods are not equally applicable.  This is because, although the
constraint distribution is invariant under the symmetry group, it is
not usually the case that any fundamental vector field of the action
belongs to it.  There is consequently no conservation of momentum, and
no possiblility of Routh-type reduction.  The greater part of this
paper is therefore devoted to the adaptation of Lagrange-Poincar\'{e}
reduction to constrained systems.  This we discuss in full
generality:\ whereas many other papers (\cite{BKMM,AMZ,CMR2,MeLa} for
example) restrict their attention to the case in which at each point
$q$ of $Q$ the constraint distribution $\D_q$ and the tangent space
$\V_q$ to the orbit of the action together span $T_qQ$, we make no
such so-called `dimension assumption'; our only requirement is that
$\D_q\cap\V_q$ has constant dimension.

Though Routh-type reduction is not possible in general, it can arise
in particular cases, where there is a Lie subgroup $H$ of the symmetry
group $G$, necessarily normal, with Lie algebra $\h$, such that for
all $\xi\in\h$ the corresponding fundamental vector field
$\tilde{\xi}$ lies in $\D$.  Symmetries belonging to $H$ are said to
be horizontal. We devote a separate section to the discussion of this
case.

The paper is laid out as follows.  In the following section we deal
with the fundamental definitions and results concerning invariance of
a constrained system under the free and proper action of a Lie group
$G$ on $Q$, leading to a version of the Atiyah sequence for such a
system.  In Section~3 we give a resum\'{e} in general terms of the
Lagrange-Poincar\'{e} reduction procedure, and show
how it may be adapted to the case of an invariant constrained
system.  In Section~4 we derive explicit Hamel-type
formulae, in terms of a (possibly) anholonomic frame, for the
Lagrange-Poincar\'{e} equations and the
Lagrange-d'Alembert-Poincar\'{e} equations successively.  In Section~5
we discuss Routh-type reduction for systems with horizontal
symmetries.

\section{Invariance of nonholonomic systems}\label{inv}

Assume from now on that a connected Lie group $G$ acts in a free and
proper way on the left on the configuration manifold $Q$.  Then $\pi:Q
\to Q/G$ is a principal fibre bundle.  The action $\psi_g$ on $Q$
induces an action $T\psi_g$ on $TQ$.  We will write $\tilde A$ for
the infinitesimal generator of the action on $Q$, associated to a Lie
algebra element $A \in\g$.  Then $\clift{\tilde A}$ is an
infinitesimal generator for the action on $TQ$.  As in e.g.\
\cite{AMZ}, we say that the nonholonomic system is invariant under
$G$, or that it admits $G$ as a symmetry group, if both the Lagrangian
$L$ and the constraint submanifold $\C$ of the system are invariant
under the induced action of $G$ on $TQ$.

\begin{prop}
The constraint submanifold $\C\subset TQ$ is invariant under $T\psi$ if and
only if the constraint distribution $\D$ on $Q$ is invariant under $\psi$.
\end{prop}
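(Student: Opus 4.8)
The plan is to prove the equivalence by recognizing that $\C$ is defined fibrewise as $\C_q = \D_q \subset T_qQ$, and that the induced action $T\psi_g$ on $TQ$ restricts fibrewise to the linear maps $T_q\psi_g : T_qQ \to T_{\psi_g(q)}Q$. The key observation is that $\C$ being a subset of $TQ$ built pointwise from the distribution $\D$ means that invariance of the global object $\C$ should translate directly into the pointwise statement that each linear map $T_q\psi_g$ carries the subspace $\D_q$ to the subspace $\D_{\psi_g(q)}$, which is exactly what invariance of $\D$ means.

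First I would make precise the two notions of invariance. The constraint submanifold $\C$ is invariant under $T\psi$ means $T\psi_g(\C) = \C$ for all $g \in G$; equivalently, for every $(q,u) \in TQ$ with $u \in \D_q$, the image $(\psi_g(q), T_q\psi_g(u))$ again lies in $\C$, i.e.\ $T_q\psi_g(u) \in \D_{\psi_g(q)}$. The distribution $\D$ is invariant under $\psi$ means $T_q\psi_g(\D_q) = \D_{\psi_g(q)}$ for all $q \in Q$ and $g \in G$.

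Next I would prove the two implications. For the forward direction, assume $\C$ is invariant. Fix $q$ and $g$; for any $u \in \D_q$ we have $(q,u)\in\C$, so $T\psi_g(q,u)=(\psi_g(q),T_q\psi_g(u))\in\C$, which by definition of $\C$ gives $T_q\psi_g(u)\in\D_{\psi_g(q)}$; hence $T_q\psi_g(\D_q)\subseteq\D_{\psi_g(q)}$. Applying the same argument to $g^{-1}$ at the point $\psi_g(q)$ yields the reverse inclusion, so $T_q\psi_g(\D_q)=\D_{\psi_g(q)}$ and $\D$ is invariant. The converse is immediate: if $\D$ is invariant then for $(q,u)\in\C$, i.e.\ $u\in\D_q$, linearity and invariance give $T_q\psi_g(u)\in T_q\psi_g(\D_q)=\D_{\psi_g(q)}$, so $T\psi_g(q,u)\in\C$; thus $T\psi_g(\C)\subseteq\C$, and equality follows again by considering $g^{-1}$.

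Since the argument is essentially a matter of unwinding the fibrewise definition relating $\C$ and $\D$, I do not expect a serious obstacle. The only point requiring a little care is the use of inverses to upgrade the set inclusions to equalities, which relies on $G$ being a group so that $T_q\psi_g$ is a linear isomorphism with inverse $T_{\psi_g(q)}\psi_{g^{-1}}$; this is automatic here since $\psi$ is a group action.
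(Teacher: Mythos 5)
Your proof is correct and follows essentially the same route as the paper's: unwinding the fibrewise definition $\C=\{(q,u):u\in\D_q\}$ so that invariance of $\C$ under $T\psi$ translates directly into $\psi_{g*}\D_q=\D_{\psi_g(q)}$ for all $q$ and $g$. The only difference is one of presentation: the paper states the equivalence in a single sentence, while you spell out explicitly the use of $g^{-1}$ to upgrade the inclusions to equalities, a step the paper leaves implicit.
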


\begin{proof}
Since $\C=\{(q,u):u\in \D_q\}$, $\C$ is invariant under
$T\psi$ if and only if for every $q\in Q$, $u\in\D_q$ and $g\in G$,
$\psi_{g*}u\in\D_{\psi_g(q)}$; that is to say, for every $q\in Q$
and $g\in G$, $\D_{\psi_g(q)}=\psi_{g*}\D_q$.
\end{proof}

\begin{prop} If $L$ is regular with respect to $\D$ the vector field
$\Gamma$ is invariant under the induced action of $G$ on
$\C$.\end{prop}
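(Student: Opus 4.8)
The plan is to exploit the uniqueness built into the preceding proposition. That result characterises $\Gamma$ as the one and only vector field on $\C$ that (i) is of second-order type, (ii) is tangent to $\C$, and (iii) satisfies $\Gamma(\vlift{Z}(L))-\clift{Z}(L)=0$ on $\C$ for every $Z\in\D$. Rather than computing anything about $\Gamma$ directly, I would fix $g\in G$, form the push-forward $\Gamma_g:=(T\psi_g)_*\Gamma$, and verify that $\Gamma_g$ again satisfies (i)--(iii). Uniqueness then forces $\Gamma_g=\Gamma$, and since this holds for every $g$ it is exactly the assertion that $\Gamma$ is invariant under the induced action. Because $G$ is connected one could instead argue infinitesimally and prove $[\clift{\tilde A},\Gamma]=0$ for all $A\in\g$, but the finite version via uniqueness is cleaner and avoids bracket computations.

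First I would dispose of (i) and (ii), which are formal. The projection $\tau$ is equivariant, $\tau\circ T\psi_g=\psi_g\circ\tau$, so applying the tangent functor gives $T\tau\circ T(T\psi_g)=T\psi_g\circ T\tau$; combined with the second-order identity $T\tau\circ\Gamma=\mathrm{id}$ this yields $T\tau\circ\Gamma_g=\mathrm{id}$, so $\Gamma_g$ is again of second-order type. For (ii), the previous proposition tells us that invariance of $\C$ under $T\psi$ is equivalent to invariance of $\D$ under $\psi$; since $T\psi_g$ is a diffeomorphism preserving $\C$ and $\Gamma$ is tangent to $\C$, its push-forward $\Gamma_g$ is tangent to $\C$ as well.

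The substance of the argument is (iii), and this is where I expect the real work to lie. The two tools I would use are the naturality of the lifts under a diffeomorphism, namely $(T\psi_g)_*\vlift{Z}=\vlift{(\psi_{g*}Z)}$ and $(T\psi_g)_*\clift{Z}=\clift{(\psi_{g*}Z)}$ (standard facts from tangent-bundle geometry, cf.\ \cite{CP}), together with the two invariance hypotheses $L\circ T\psi_g=L$ and $\psi_{g*}\D=\D$. Given $Z\in\D$, set $W:=\psi_{g^{-1}*}Z$; invariance of $\D$ guarantees $W\in\D$, and $\psi_{g*}W=Z$. Using the change-of-variables rule $(\phi_*X)(f)=\bigl(X(f\circ\phi)\bigr)\circ\phi^{-1}$ with the invariance of $L$ and the lift naturality, I would check the identities $\vlift{Z}(L)\circ T\psi_g=\vlift{W}(L)$ and $\clift{Z}(L)\circ T\psi_g=\clift{W}(L)$. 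Feeding these into the definition of $\Gamma_g$ then gives
\[
\Gamma_g(\vlift{Z}(L))-\clift{Z}(L)=\bigl(\Gamma(\vlift{W}(L))-\clift{W}(L)\bigr)\circ (T\psi_g)^{-1}.
\]
Since $W\in\D$, the bracketed expression vanishes on $\C$ by the characterising property of $\Gamma$, while $(T\psi_g)^{-1}$ maps $\C$ to $\C$; hence the whole expression vanishes on $\C$, establishing (iii) for $\Gamma_g$. The only point demanding care is the consistent bookkeeping of push-forwards versus pull-backs in these identities; once that is handled correctly, uniqueness closes the argument.
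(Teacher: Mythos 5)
Your proof is correct, but it takes a genuinely different route from the paper's. The paper argues infinitesimally: it applies $\clift{\tilde A}$ to the Lagrange-d'Alembert equation, uses the bracket identities $[\clift{\tilde{A}},\vlift{Z}]=\vlift{[\tilde{A},Z]}$ and $[\clift{\tilde{A}},\clift{Z}]=\clift{[\tilde{A},Z]}$ together with the fact that $[\tilde{A},Z]\in\D$ (invariance of $\D$) to reduce everything to $[\clift{\tilde A},\Gamma](\vlift{Z}(L))=0$ for all $Z\in\D$, then shows that $[\clift{\tilde A},\Gamma]$ is vertical --- a step that requires extending $\Gamma$ to a second-order field on a neighbourhood of $\C$ in $TQ$ and checking that the bracket on $\C$ is independent of the extension --- and finally invokes regularity of $L$ with respect to $\D$ to conclude $[\clift{\tilde A},\Gamma]=0$, the infinitesimal condition for invariance (for which connectedness of $G$ is needed to recover invariance under the group itself). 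Your route --- push $\Gamma$ forward by $T\psi_g$, verify the three characterising properties (second-order type via equivariance of $\tau$, tangency to $\C$ via invariance of $\C$, and the Lagrange-d'Alembert equations via naturality of the lifts plus invariance of $L$ and $\D$), and then quote uniqueness --- buys several things: the regularity hypothesis enters only through the already-proved uniqueness statement, the delicate verticality-and-extension argument disappears entirely, and connectedness of $G$ is never used. What the paper's computation buys in exchange is the explicit identity $[\clift{\tilde A},\Gamma]=0$, which is the form of invariance actually quoted in the subsequent reduction, and which your argument yields only after differentiating $\Gamma_g=\Gamma$ with respect to $g$. One cosmetic point: on $\C$ the second-order condition reads $\tau_{*(q,u)}\Gamma=u$, so $T\tau\circ\Gamma$ equals the inclusion $\C\hookrightarrow TQ$ rather than the identity; your computation establishes exactly this, so nothing is lost, but the phrasing should be adjusted.
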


\begin{proof} For any $A \in\g$ and $Z\in\D$
we have
\begin{align*}
0&= \clift{\tilde A} \big( \Gamma(\vlift{Z}(L))
-\clift{Z}(L) \big)\\
&= [\clift{\tilde A},\Gamma](\vlift{Z}(L))
- \Gamma (\clift{\tilde A} (\vlift{Z}(L)))
- [\clift{\tilde A}, \clift{Z}](L)\\
& = [\clift{\tilde A},\Gamma](\vlift{Z}(L))
-\Gamma ([\tilde A,Z\vlift](L)) - [\tilde A,Z\clift](L).
\end{align*}
Now $[\tilde{A},Z]\in\D$, due to the assumed invariance of $\D$.  By
the Lagrange-d'Alembert equation the last two terms above vanish.  On
the other hand, the bracket $[\clift{\tilde A},\Gamma]$ is vertical.
This is certainly true for a second-order differential equation field
on $TQ$, by a simple calculation in coordinates.  Now $\Gamma$ is a
second-order differential equation field on $\C$; but we can evidently
extend it to a second-order differential equation field on a
neighbourhood of $\C$ in $TQ$.  Since both $\clift{\tilde A}$ and
$\Gamma$ are tangent to $\C$, so also is their bracket.  So on $\C$,
$[\clift{\tilde A},\Gamma]$ is independent of the choice of
extension, and is vertical.  It follows from the fact that the
equation $[\clift{\tilde A},\Gamma](\vlift{Z}(L))=0$ holds for all
$Z\in\D$, and the assumption that $L$ is regular with respect to $\D$,
that $[\clift{\tilde A},\Gamma]=0$.  This may easily be seen by
expressing everything in terms of the vertical lifts of a local basis
for $\D$.  But $[\clift{\tilde A},\Gamma]=0$ is the infinitesimal
condition for $\Gamma$ to be invariant.\end{proof}

Since $\Gamma$ is invariant, it reduces to a vector field
$\check\Gamma$ on $\C/G$.  Our main overall aim in this paper is to
show how to determine this reduced vector field. We begin however by
considering some aspects of the structure of nonholonomic systems
which are invariant in the sense defined above.

As we have already noted, $\pi:Q \to Q/G$ is a principal fibre bundle.
Since $\D$ is invariant it defines a distribution $\bar{\D}$ on $Q/G$
by $\bar{\D}_{\pi(q)}=\pi_*(\D_q)$; this is well-defined because
$\pi\circ\psi_g=\pi$.  Let us assume that $\bar{\D}$ has constant
dimension.  Then $Q/G$ is equipped with a regular distribution
$\bar{\D}$.  Denote the corresponding submanifold (indeed vector
subbundle) of $T(Q/G)$ by $\bar{\C}$.

Let $\V_q = \ker\pi_{*q}$.  Note that
$\ker\pi_{*q}|_{\D_q}=\D_q\cap\V_q$.  Let us denote it by $\S_q$.
Evidently $\S$ is an invariant distribution on $Q$, which is of
constant dimension by the corresponding assumption for $\bar{\D}$.
Since $\S_q\subset \V_q$ for each $q\in Q$, we may identify $\S_q$
with a vector subspace $\g^q$ of $\g$, where $\g^q=\{A \in\g \,|\,
{\tilde A}_q \in\S_q\}$.  In terms of $TQ$, we can express $\g^q$ as
follows.  For $w\in T_qQ$, $w\in\D_q$ if and only if $\vlift{w}$ is
tangent to $\C$; thus $A\in\g^q$ if and only if
$\vlift{\tilde{A}_q}$ is tangent to $\C$.  Since (see e.g.\
\cite{CP,MRbook})
\[
\psi_{g*}\left(\tilde{A}_q\right)=
\left(\widetilde{\ad(g^{-1})A}\right)_{\psi_g(q)}
\]
we have
\[
\g^{\psi_g(q)}=\ad(g^{-1})\g^q.
\]
Consider $\g^\D=\{(q,A)\,|\,A\in\g^q\}$. There is an action of
$G$ on $\g^\D$ given by
\[
(q,A)\mapsto(\psi_g(q),\ad(g^{-1})A).
\]
On taking the quotient we obtain a vector bundle over $Q/G$, say
$\bar{g}^\D$, which is a vector subbundle of $\bar{\g} = (Q\times\g)/G
\to Q/G$, the adjoint bundle associated with the principal $G$-bundle
$Q$.

\begin{prop}\label{Atiyahprop}
We have the following short exact sequence of vector bundles over $Q/G$:
\[
0 \to \bar{\g}^\D \to \C/G \to \bar{\C} \to 0.
\]
\end{prop}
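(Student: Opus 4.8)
The plan is to construct the two bundle morphisms explicitly and then verify exactness fibrewise; conceptually this sequence is nothing but the restriction to the constraint data of the Atiyah sequence $0 \to \bar{\g} \to TQ/G \to T(Q/G) \to 0$ associated with the principal bundle $\pi:Q\to Q/G$. First I would define the surjection $\C/G \to \bar{\C}$. Since $u\in\D_q$ implies $\pi_*(u)\in\bar{\D}_{\pi(q)}$ by the very definition of $\bar{\D}$, the map $\pi_*$ carries $\C$ into $\bar{\C}$; and because $\pi\circ\psi_g=\pi$ gives $\pi_*\circ\psi_{g*}=\pi_*$, this map is $G$-invariant on $\C$ and therefore descends to a well-defined map $\C/G\to\bar{\C}$.

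Next I would define the injection $\bar{\g}^\D\to\C/G$. At the level of total spaces it is induced by $(q,A)\mapsto(q,\tilde{A}_q)$, which lands in $\C$ because $A\in\g^q$ means $\tilde{A}_q\in\S_q\subset\D_q$. The compatibility of this assignment with the two group actions, namely $(q,A)\mapsto(\psi_g(q),\ad(g^{-1})A)$ on $\g^\D$ and $(q,u)\mapsto(\psi_g(q),\psi_{g*}u)$ on $\C$, is exactly the identity $\psi_{g*}(\tilde{A}_q)=\bigl(\widetilde{\ad(g^{-1})A}\bigr)_{\psi_g(q)}$ recorded above, so the map descends to a morphism of the quotient bundles.

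With both maps in hand, exactness is a fibrewise check over a point $\pi(q)\in Q/G$. Injectivity of $\bar{\g}^\D\to\C/G$ follows from the freeness of the action, which makes $A\mapsto\tilde{A}_q$ injective; surjectivity of $\C/G\to\bar{\C}$ is immediate from the defining relation $\bar{\D}_{\pi(q)}=\pi_*(\D_q)$. For exactness in the middle, an element of the kernel of $\C/G\to\bar{\C}$ over $\pi(q)$ is a class $[(q,u)]$ with $u\in\D_q$ and $\pi_*(u)=0$, that is $u\in\D_q\cap\V_q=\S_q$; by the identification of $\S_q$ with $\g^q$ this $u$ is $\tilde{A}_q$ for a unique $A\in\g^q$, which is precisely the image of $[(q,A)]$. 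Hence $\im=\ker$ at the middle term. (A dimension count $\dim\D_q=\dim\S_q+\dim\bar{\D}_{\pi(q)}$ obtained from $\pi_*|_{\D_q}$ provides a consistency check.)

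The real work beyond this linear algebra is to confirm that $\C/G$ is genuinely a vector bundle over $Q/G$ and that the two constructed maps are smooth vector-bundle morphisms rather than merely fibrewise-linear set maps. I would dispatch this by observing that $G$ acts on $\C$ by vector-bundle automorphisms covering the free and proper action on $Q$, so that $\C/G\to Q/G$ is a vector bundle whose fibre over $\pi(q)$ is canonically $\D_q$; smoothness and fibrewise linearity of both arrows then follow because each descends from a smooth, fibrewise-linear equivariant map on the total spaces. This verification, rather than the exactness itself, is where the care is needed, but it is routine once the freeness and properness hypotheses are invoked.
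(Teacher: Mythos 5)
Your proof is correct and takes essentially the approach the paper intends: the paper states this proposition with no written proof, regarding it as immediate from the constructions just given (the equivariance identity $\psi_{g*}(\tilde{A}_q)=\bigl(\widetilde{\ad(g^{-1})A}\bigr)_{\psi_g(q)}$, the definition $\bar{\D}_{\pi(q)}=\pi_*(\D_q)$, and the identification of $\S_q=\D_q\cap\V_q$ with $\g^q$), together with the remark that each term is a subbundle of the corresponding term of the Atiyah sequence $0\to\bar{\g}\to TQ/G\to T(Q/G)\to 0$. Your write-up simply makes explicit the fibrewise exactness checks and the quotient vector-bundle structure that the paper leaves tacit; there is nothing to correct.
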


This is a version for constrained systems of the
so-called Atiyah sequence (see e.g.\ \cite{rraspects,DMM}),
\[
0 \to \bar{\g} \to TQ/G \to T(Q/G) \to 0.
\]
Each term in the sequence of the proposition is a subbundle of the
corresponding term in the Atiyah sequence.  In the next section we
will use this observation when we divide the reduced equations for an
invariant nonholonomic system into two sets.

\section{Lagrange-Poincar\'{e} reduction:\ generalities}\label{gen}

\subsection{Standard Lagrange-Poincar\'e reduction}

Before considering reduction of invariant nonholonomic systems we
discuss Lagrange-Poincar\'{e} reduction of the standard Euler-Lagrange
equations.  Recall the Euler-Lagrange equations as they appear in
Proposition~\ref{Prop1}.  Assume that $L$ is $G$-invariant:\ then so
is $\Gamma$; it reduces to a vector field $\check\Gamma$ on $TQ/G$,
which we want equations for --- so-called reduced equations.

There is a sense in which the reduction of the Euler-Lagrange
equations is immediate.  The key step is to rewrite them in
$G$-invariant form.  It is enough to take $Z$ to be invariant.  Then
$\vlift{Z}$ and $\clift{Z}$ are invariant under the induced action of
$G$ on $TQ$, and so define vector fields on $TQ/G$, which we denote by
$\vlift{\check{Z}}$ and $\clift{\check{Z}}$, though of course they are
not vertical or complete lifts.  The function $\clift{Z}(L)$ is
invariant, and so defines a function on $TQ/G$, which is just
$\clift{\check{Z}}(l)$ (where $l$ is the reduced function of $L$ on $TQ/G$);
likewise for $\vlift{Z}(L)$.  Then the reduced equations are simply
\[
\check\Gamma(\vlift{\check{Z}}(l))-\clift{\check{Z}}(l)=0,
\]
on $TQ/G$, for all invariant vector fields $Z$ on $Q$; they are called the
Lagrange-Poincar\'{e} equations.

However, we can be more explicit. The Euler-Lagrange equations can be
divided into two sets, according to whether we take $Z$ to be tangent
to the fibres of $\pi$ or transverse to them.

{\bf The Lagrange-Poincar\'e equation for momentum.} Consider first
the Euler-Lagrange equation $\Gamma(\vlift{Z}(L))-\clift{Z}(L)=0$ for
any vector field $Z$ which is vertical with respect to $\pi:Q\to Q/G$.
Such a vector field is determined by a $\g$-valued function $\zeta$ on
$Q$, where $Z_q=\widetilde{\zeta(q)}_q$.  The momentum $p$ is a
$\g^*$-valued function on $TQ$, which is $G$-equivariant under the
usual action of $G$ on $TQ$ and the coadjoint action on $\g^*$.

Take first $A\in\g$.  We have
\[
\vlift{\tilde{A}}(L)=\langle A,p\rangle
\]
(as real-valued functions on $TQ$; the angle brackets denote the
pairing of $\g$ and $\g^*$).  The conservation of momentum is
just $\Gamma\langle A,p\rangle=0$ (the Euler-Lagrange equation
with $Z=\tilde{A}$):\ or, since $A$ is constant and arbitrary,
$\Gamma(p)=0$.

Now consider $\langle\zeta,p\rangle$:\ applying Leibniz' rule we have
\[
\Gamma\langle\zeta,p\rangle=
\langle\Gamma(\zeta),p\rangle+\langle\zeta,\Gamma(p)\rangle
=\langle\dot{\zeta},p\rangle
\]
(using the fact that $\Gamma(f)=\dot{f}$ for any function $f$ on $Q$).
We claim that the (almost tautological) equation
$\Gamma\langle\zeta,p\rangle=\langle\dot{\zeta},p\rangle$ is the
Euler-Lagrange equation for vertical $Z$.  We compute $\vlift{Z}$ and
$\clift{Z}$ in terms of $\zeta$, as follows.  Take a basis $\{E_r\}$
of $\g$, and set $\zeta=\zeta^rE_r$, where the coefficients $\zeta^r$
are functions on $Q$.  Then $Z=\zeta^r\tilde{E}_r$, and so
\[
\vlift{Z}=\zeta^r\vlift{\tilde{E}_r},\quad
\clift{Z}=\zeta^r\clift{\tilde{E}_r}+\dot{\zeta}^r\vlift{\tilde{E}_r}.
\]
Then
\[
\vlift{Z}(L)=\zeta^rp_r=\langle\zeta,p\rangle,\quad
\clift{Z}(L)=\dot{\zeta}^rp_r=\langle\dot{\zeta},p\rangle
\]
as claimed.

For reduction we need to take $Z$ to be $G$-invariant:\ it will be so
if and only if $\zeta$ is $G$-equivariant (with now the adjoint action
on $\g$); this amounts to taking a section of the adjoint bundle $\bar
\g \to Q/G$ over $Q$.  It is then clear that $\langle\zeta,p\rangle$
will be invariant.  It must then be the case (from the Euler-Lagrange
equation and the invariance of $\Gamma$) that
$\langle\dot{\zeta},p\rangle$ is invariant.  This can be shown
directly too, in various ways.  Here is a vector field version.  The
$G$-equivariance of $p$ can be expressed as
\[
\langle B,\clift{\tilde{A}}(p)\rangle=-\langle [A,B],p\rangle,
\]
where $A,B\in\g$ and $[A,B]$ is their bracket in $\g$.  The assumed
$G$-equivariance of $\zeta$ is just $\tilde{A}(\zeta)=[A,\zeta]$
(again, bracket in $\g$).  Then
\begin{align*}
\clift{\tilde{A}}\langle\dot{\zeta},p\rangle
&=\langle\clift{\tilde{A}}(\dot{\zeta}),p\rangle
+\langle\dot{\zeta},\clift{\tilde{A}}(p)\rangle\\
&=\left\langle\frac{d}{dt}(\tilde{A}(\zeta)),p\right\rangle
-\langle[A,\dot{\zeta}],p\rangle\\
&=\left\langle\frac{d}{dt}([A,\zeta]),p\right\rangle
-\langle[A,\dot{\zeta}],p\rangle=0,
\end{align*}
using the obvious fact that $\clift{X}(\dot{f})=d/dt(X(f))$.

Since $\langle\zeta,p\rangle$ and $\langle\dot{\zeta},p\rangle$ are
invariant they define functions on $TQ/G$, which we denote by
$\Langle\zeta,p\Rangle$ and $\Langle\dot{\zeta},p\Rangle$. The
corresponding reduced equation is
\[
\check\Gamma\Langle\zeta,p\Rangle=\Langle\dot{\zeta},p\Rangle.
\]
We call it the Lagrange-Poincar\'{e} equation for momentum.

{\bf The horizontal Lagrange-Poincar\'e equation.} To obtain an
invariant Euler-Lagrange equation corresponding to the transverse
directions we can use a principal connection on $\pi:Q\to
Q/G$, or, equivalently, a splitting of the Atiyah sequence.
Suppose we have such a connection:\ for any vector field $Y$ on $Q/G$
let $\hlift{Y}$ be its horizontal lift to $Q$; then the transverse
(let's call it horizontal) Euler-Lagrange equation is
\[
\Gamma(\vlift{(\hlift{Y})}(L))-\clift{(\hlift{Y})}(L)=0.
\]
Incidentally, this expression is $C^\infty(Q/G)$-linear in $Y$.
Moreover, each term is $G$-invariant.

We will express this equation in a different way.  First we recall the
construction of the Vilms connection from \cite{rraspects} (which is a
special case of a more general construction in \cite{Vilms}).  The
complete lift of a type $(1,1)$ tensor $T$ on $Q$ is given by
\cite{CP}
\[
\clift{T}(\clift{X})=\clift{T(X)},\quad
\clift{T}(\vlift{X})=\vlift{T(X)}.
\]
The original connection on $\pi:Q\to Q/G$ can be represented by a type
$(1,1)$ tensor field $\omega$, so that $\omega(X)=0$ if and only if
$X$ is horizontal, $\omega(V)=V$ for $V$ vertical.  Then the type
$(1,1)$ tensor on $TQ$ defining the Vilms connection is just
$\clift{\omega}$; moreover, it is invariant under the action of $G$ on
$TQ$.  From the defining relations of the complete lift of a type
$(1,1)$ tensor field above, one easily concludes that the horizontal
distribution defined by the Vilms connection is spanned by the
complete and vertical lifts of the horizontal vector fields of the
original connection.

Next, a remark about complete and vertical lifts.  Let $\phi:M\to N$
be a smooth map, and suppose that vector fields $U$ on $M$ and $V$ on
$N$ are $\phi$-related.  Then $\clift{U}$ and $\clift{V}$ are
$T\phi$-related, and likewise $\vlift{U}$ and $\vlift{V}$.  One can
easily prove this by considering the flows of the involved vector
fields. 

Consider now $\vlift{(\hlift{X})}$, for any vector field $X$ on $Q/G$.
It is horizontal with respect to the Vilms connection.  By the
previous remark, since $\hlift{X}$ is $\pi$-related to $X$,
$\vlift{(\hlift{X})}$ is $T\pi$-related to $\vlift{X}$.  Thus
$\vlift{(\hlift{X})}$ is the horizontal lift with respect to the Vilms
connection of the vector field $\vlift{X}$ on $T(Q/G)$.  With some
abuse of notation we may write
$\vlift{(\hlift{X})}=\hlift{(\vlift{X})}$ (warning:\ V and H have
different meanings either side of the equality sign).  Likewise for
$\clift{X}$.  So we can rewrite the horizontal Euler-Lagrange equation
above as follows:
\[
\Gamma(\hlift{(\vlift{Y})}(L))-\hlift{(\clift{Y})}(L)=0
\]
for all $Y$ on $Q/G$.

The Vilms connection is $G$-invariant, so each of
$\hlift{(\vlift{Y})}$ and $\hlift{(\clift{Y})}$ is an invariant vector
field on $TQ$, and so each passes to the quotient to define vector
fields $\hchecklift{(\vlift{Y})}$ and $\hchecklift{(\clift{Y})}$ on
$TQ/G$. The reduced horizontal Euler-Lagrange equation is
\[
\check\Gamma(\hchecklift{(\vlift{Y})}(l))-\hchecklift{(\clift{Y})}(l)=0
\]
on $TQ/G$, for all $Y$ on $Q/G$. We call it the horizontal
Lagrange-Poincar\'{e} equation.

In the next step, we can write this equation as
\[
\check\Gamma(\langle \vlift{Y},\hchecklift{d}l\rangle)-
\langle \clift{Y},\hchecklift{d}l\rangle=0.
\]
Here $\hchecklift{d}l$ is a 1-form along the projection $TQ/G\to
T(Q/G)$ (or a 1-form on $TQ/G$ which is semi-basic with respect to
that projection), such that for any vector field $W$ on $T(Q/G)$,
$\langle W,\hchecklift{d}l\rangle=\hchecklift{W}(l)$.  Since
$\vlift{Y}$ and $\clift{Y}$ are actually the lifts from $Q/G$ to
$T(Q/G)$ this looks very much like an Euler-Lagrange equation on
$T(Q/G)$ (in fact, it would be one if $\hchecklift{d}l$ was replaced
by the exterior derivative $d$ on $T(Q/G)$).

We have divided the reduced equations in two sets, in accordance with
the decomposition of $TQ/G$ given by the Atiyah sequence.  We conclude
therefore: 
\begin{prop}
The Lagrange-Poincar\'{e} equations are given by
\begin{align*}
&\check\Gamma\Langle\zeta,p\Rangle=\Langle\dot{\zeta},p\Rangle\\
&\check\Gamma(\langle \vlift{Y},\hchecklift{d}l\rangle)-
\langle \clift{Y},\hchecklift{d}l\rangle=0,
\end{align*}
where $\zeta$ is any $G$-equivariant $\g$-valued function on $TQ$ and
$Y$ is any vector field on $Q/G$.
\end{prop}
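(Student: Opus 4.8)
The plan is to treat this proposition as the natural summary of the two derivations just completed, the point being that the two displayed families of equations together are \emph{equivalent} to the full collection of Lagrange-Poincar\'e equations $\check\Gamma(\vlift{\check{Z}}(l))-\clift{\check{Z}}(l)=0$ ranging over all $G$-invariant vector fields $Z$ on $Q$. First I would note that the assignment $Z\mapsto \Gamma(\vlift{Z}(L))-\clift{Z}(L)$ is $\R$-linear in $Z$, since both lifts $\vlift{Z}$ and $\clift{Z}$ depend $\R$-linearly on $Z$ while $L$ is fixed; consequently the reduced expression is $\R$-linear in $\check{Z}$ as well. Hence the vanishing of the equation for every invariant $Z$ is equivalent to its vanishing on any collection of invariant vector fields that spans the invariant fields pointwise.

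Next I would use the chosen principal connection, represented by the type $(1,1)$ tensor $\omega$, to split each invariant vector field uniquely as $Z=\omega(Z)+(Z-\omega(Z))$, a vertical part plus a horizontal part. Because a principal connection is $G$-invariant, both projections send invariant fields to invariant fields, so the two summands are again invariant. The invariant vertical fields are precisely those of the form $Z_q=\widetilde{\zeta(q)}_q$ with $\zeta$ a $G$-equivariant $\g$-valued function (equivalently, a section of $\bar{\g}\to Q/G$), while the invariant horizontal fields are precisely the horizontal lifts $\hlift{Y}$ of vector fields $Y$ on $Q/G$. Together these two families span the invariant vector fields at every point, so by the linearity above the full set of Lagrange-Poincar\'e equations is equivalent to the two families obtained by restricting to vertical $Z$ and to horizontal $Z=\hlift{Y}$.

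It then remains only to identify each family with the stated equation. For vertical $Z$ the computation already carried out (using $\vlift{Z}(L)=\langle\zeta,p\rangle$ and $\clift{Z}(L)=\langle\dot{\zeta},p\rangle$, together with the established invariance of both functions) reduces the equation to $\check\Gamma\Langle\zeta,p\Rangle=\Langle\dot{\zeta},p\Rangle$. For horizontal $Z=\hlift{Y}$, passing through the Vilms connection rewrites the equation first as $\check\Gamma(\hchecklift{(\vlift{Y})}(l))-\hchecklift{(\clift{Y})}(l)=0$ and then, via the one-form $\hchecklift{d}l$, as $\check\Gamma(\langle \vlift{Y},\hchecklift{d}l\rangle)-\langle \clift{Y},\hchecklift{d}l\rangle=0$. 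This split is exactly the one dictated by the Atiyah sequence of Proposition~\ref{Atiyahprop}, the vertical piece sitting in $\bar{\g}$ and the horizontal piece in $T(Q/G)$.

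The only step needing genuine care is the claim that the two families jointly exhaust all invariant $Z$ and that the splitting respects invariance; both follow at once from the $G$-invariance of the principal connection, so I do not expect a real obstacle here. All the analytic content of the proposition is carried by the two preceding derivations, and the proof itself is essentially the observation that the connection-induced decomposition of invariant vector fields matches the Atiyah-sequence decomposition of $TQ/G$.
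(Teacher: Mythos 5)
Your proposal is correct and takes essentially the same route as the paper: there the proposition is stated as the direct summary of the two preceding derivations (the momentum equation for vertical invariant $Z$ and the horizontal equation via the Vilms connection and $\hchecklift{d}l$), with the split into the two families dictated by the principal connection, i.e.\ by the Atiyah sequence. One small point of precision: to pass from the two families back to all invariant $Z$ you only need additivity applied to the connection splitting $Z=\omega(Z)+(Z-\omega(Z))$ (or, for a pointwise-spanning argument, the $C^\infty$-linearity of the Euler--Lagrange expression in $Z$, which the paper notes), rather than mere $\R$-linearity, but as you actually apply it the argument is sound.
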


\subsection{Lagrange-Poincar\'{e}-type reduction of nonholonomic
systems}\label{nonholgen}

Again, there is a sense in which the reduction of the
Lagrange-d'Alembert equations is immediate.  These equations say that
on $\C$
\[
\Gamma(\vlift{Z}(L))-\clift{Z}(L)=0
\]
for all $Z\in\D$.  We assume that $\D$ is $G$-invariant.  It is again
enough to take $Z$ to be invariant.  The function $\clift{Z}(L)$ is
invariant, and so defines a function on $TQ/G$, which is just
$\clift{\check{Z}}(l)$ (where $l$ is the reduced function of $L$).
Likewise for $\vlift{Z}(L)$; however, since $Z\in\D$, $\vlift{Z}$ is
tangent to $\C$ and so we can replace $l$ by $l_c$ in the first term.
Then the reduced equation is simply
\[
\check\Gamma(\vlift{\check{Z}}(l_c))-\clift{\check{Z}}(l)=0
\]
on $\C/G$, for all $Z\in\D$; of course the second term must be
understood as the restriction of that function on $TQ/G$ to the
submanifold $\C/G$.

The reduced equations are now called the
Lagrange-d'Alembert-Poincar\'{e} equations.  Like the
Lagrange-Poincar\'{e} equations they can be split into two sets,
corresponding to the vertical and horizontal parts of $\D$.  Much as
before, these two sets are dictated by the version of the Atiyah
sequence we found in Proposition~\ref{Atiyahprop}.

{\bf The Lagrange-d'Alembert-Poincar\'{e} equation for momentum.}
First, we consider vertical vector fields in $\D$, that is, vector
fields $Z$ in $\S$.  Every such vector field defines a $\g$-valued
function $\zeta$ on $Q$, where now $\zeta(q)\in\g^q$.  Since $L$ is
$G$-invariant we may define the momentum map $p:TQ\to\g^*$, as usual,
and it is $G$-equivariant with respect to the coadjoint action on
$\g^*$.  However, we now have no reason to suppose that in general
$p$, or any component of it, is conserved.  On the other hand, the
argument that leads to the formulae
$\vlift{Z}(L)=\langle\zeta,p\rangle$ and
$\clift{Z}(L)=\langle\dot{\zeta},p\rangle$ still holds good, and we
conclude that the Lagrange-d'Alembert equation for $Z\in\S$ can be
written
\[
\Gamma\langle\zeta,p\rangle=\langle\dot{\zeta},p\rangle
\]
on $\C$.  We conclude further that the following weakened version of
conservation of momentum for a constrained system holds:\
$\langle\zeta,\Gamma(p)\rangle=0$ for all $\g$-valued functions
$\zeta$ such that $\zeta(q)\in\g^q$; that is to say, for all
$(q,u)\in\C$, $\Gamma_{(q,u)}(p)\in(\g^q)^\perp$.  Of course, if it
should happen that for some $\zeta$, $\langle\dot{\zeta},p\rangle=0$
on $\C$ then $\langle \zeta,p\rangle$ will be conserved.  In
particular, this will occur if $\S$ contains a fundamental vector
field of the action, that is, if there is some $A\in\g$ such that
$A\in\g^q$ for all $q\in Q$:\ then $\langle A,p\rangle$ (the
$A$-component of momentum) will be conserved.

To obtain a $G$-invariant vector field $Z\in\S$ we must take $\zeta$
to be $G$-equivariant under the adjoint action on $\g$. Then we have
the reduced equation
\[
\check\Gamma\Langle\zeta,p\Rangle=\Langle\dot{\zeta},p\Rangle
\]
on $\C/G$, where $\zeta(q)\in\g^q$.

{\bf The horizontal Lagrange-d'Alembert-Poincar\'{e} equation.} To
obtain the reduced equation corresponding to the horizontal part of
$\D$ we need a splitting of the modified Atiyah sequence of
Proposition~\ref{Atiyahprop}.  One may derive such a splitting from a
principal connection on $\pi:Q\to Q/G$ with the property that the
horizontal lift of $\bar{\D}$ is contained in $\D$.  Such a connection
can be constructed locally, by defining its horizontal subspaces as
follows.  Take a local section of $\pi$.  For every $q$ in the image
of the section choose some complement to $\S_q$ in $\D_q$ and extend
it to a complement of $\V_q$ in $T_qQ$, smoothly over the section.
Finally, extend the result along the fibres by the action of $G$.

The reduced equation is
\[
\check\Gamma(\langle \vlift{Y},\hchecklift{d}l\rangle)-
\langle \clift{Y},\hchecklift{d}l\rangle=0
\]
as before, but now with $Y\in\bar{\D}$.

The conclusion of this section is therefore:
\begin{prop}
The Lagrange-d'Alembert-Poincar\'e equations are given
by
\begin{align*}
&\check\Gamma\Langle\zeta,p\Rangle=\Langle\dot{\zeta},p\Rangle\\
&\check\Gamma(\langle \vlift{Y},\hchecklift{d}l\rangle)-
\langle \clift{Y},\hchecklift{d}l\rangle=0
\end{align*}
on $\C/G$,
where $\zeta(q)\in\g^q$ and $Y\in\bar{\D}$.
\end{prop}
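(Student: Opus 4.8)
The plan is to show that the two displayed equations, taken together, are equivalent to the single reduced Lagrange-d'Alembert equation $\check\Gamma(\vlift{\check{Z}}(l_c))-\clift{\check{Z}}(l)=0$ on $\C/G$, valid for all invariant $Z\in\D$, which was already obtained above as the immediate reduction. The strategy exactly parallels the unconstrained case of the previous subsection; the only changes are that one replaces the ordinary Atiyah sequence by its constrained version from Proposition~\ref{Atiyahprop} and restricts throughout to $\C$ and $\C/G$. The key extra ingredient, which I will take as given since its (local) existence was established above, is a principal connection on $\pi:Q\to Q/G$ whose horizontal lift satisfies $\hlift{\bar{\D}}\subset\D$.

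With such a connection fixed, the first step is to decompose an arbitrary invariant $Z\in\D$ as $Z=Z_{\S}+\hlift{Y}$, where $Y=\pi_*Z\in\bar{\D}$ and $Z_{\S}=Z-\hlift{Y}$. Since $\pi_*(Z-\hlift{Y})=0$ the field $Z_{\S}$ is vertical, and since both $Z$ and $\hlift{Y}$ lie in $\D$ so does $Z_{\S}$; hence $Z_{\S}\in\S$. This is precisely the pointwise splitting $\D=\S\oplus\hlift{\bar{\D}}$ dual to the short exact sequence of Proposition~\ref{Atiyahprop}, and it is $G$-invariant by construction of the connection.

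The second step is to substitute this decomposition into the reduced equation and exploit the additivity of each term in $Z$ (together with the $C^\infty(Q/G)$-linearity already noted for the horizontal piece) to split it into an equation for $Z_{\S}\in\S$ and one for $\hlift{Y}$. For the vertical summand, the computation carried out above gives $\vlift{Z_{\S}}(L)=\langle\zeta,p\rangle$ and $\clift{Z_{\S}}(L)=\langle\dot{\zeta},p\rangle$ with $\zeta(q)\in\g^q$; passing to the quotient, and using that $\vlift{Z_{\S}}$ is tangent to $\C$ because $Z_{\S}\in\D$, yields the momentum equation $\check\Gamma\Langle\zeta,p\Rangle=\Langle\dot{\zeta},p\Rangle$. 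For the horizontal summand, the Vilms-connection argument identifies $\vlift{(\hlift{Y})}$ and $\clift{(\hlift{Y})}$ with the horizontal lifts $\hchecklift{(\vlift{Y})}$ and $\hchecklift{(\clift{Y})}$, and rewriting in terms of the semi-basic one-form $\hchecklift{d}l$ produces the horizontal equation $\check\Gamma\langle\vlift{Y},\hchecklift{d}l\rangle-\langle\clift{Y},\hchecklift{d}l\rangle=0$ with $Y\in\bar{\D}$.

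I expect the main obstacle to lie not in any single calculation — each has essentially been performed already — but in verifying that nothing is lost in passing between the single equation and the pair. One must check that the summands $\S$ and $\hlift{\bar{\D}}$ genuinely span $\D$ (which is exactly the content of the special property $\hlift{\bar{\D}}\subset\D$ together with $\S=\D\cap\V$), and that the tangency of the vertical lift $\vlift{Z_{\S}}$ to $\C$ is preserved, so that the reduced function $l_c$ may legitimately occupy the first slot. Once this is confirmed, the equivalence of the pair with the full reduced equation follows from the additivity of each term in $Z$: specializing $Z$ to each summand recovers the two equations, while summing them recovers the equation for every $Z\in\D$. This establishes the proposition.
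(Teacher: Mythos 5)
Your proposal is correct and follows essentially the same route as the paper: both split the reduced Lagrange-d'Alembert equation into a vertical part (fields in $\S$, yielding the momentum equation exactly as in the unconstrained computation) and a horizontal part (lifts $\hlift{Y}$ with $Y\in\bar{\D}$, using a principal connection adapted so that $\hlift{\bar{\D}}\subset\D$), with everything restricted to $\C$ and $\C/G$ in accordance with the constrained Atiyah sequence of Proposition~\ref{Atiyahprop}. The only difference is one of presentation: you make the pointwise decomposition $Z=Z_{\S}+\hlift{Y}$ and the equivalence of the pair with the single reduced equation explicit, which the paper leaves implicit.
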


\section{Lagrange-Poincar\'{e}-type reduction:\ formulae}\label{form}

The versions of the reduced equations given in the previous section
are elegant and instructive.  In the literature one may find other
geometric approaches to obtain the reduced equations (see e.g.\
\cite{Cendra,DMM} for the Lagrange-Poincar\'e equations and
\cite{Bloch,AMZ,CMR2,MeLa} for the Lagrange-d'Alembert-Poincar\'e
equations).  In the interest of comparison, we shall now formulate
local versions of our reduced equations.  Our method is entirely based
on the use of suitably adapted anholonomic frames on $Q$.
Unsurprisingly, the version of the Lagrange-d'Alembert-Poincar\'{e}
equations we finally obtain will combine elements of both the
Lagrange-d'Alembert and the Lagrange-Poincar\'{e} equations, so we
first deal with each of those cases separately.

But even before doing so, it is worth recalling the basic formulae
relating to anholonomic frames.  Let $\{X_i\}$ be an anholonomic frame
on $Q$ and $v^i$ the quasi-velocities corresponding to that frame.
(The quasi-velocities $v^i$ are not to be confused with the canonical
fibre coordinates associated with the $x^i$; the coordinates
$(x^i,v^i)$ are to that extent unnatural.) A second-order field
$\Gamma$ can then be written in the form
\[
\Gamma = v^i \clift{X_i} + f^i\vlift{X_i}.
\]
We write $[X_i,X_j]=R_{ij}^kX_k$, where the functions
$R^k_{ij}$ are collectively called the object of anholonomity. Let
$v^i$ be the quasi-velocities corresponding to the frame:\ then
\[
\clift{X_i}(v^j)=-R_{ik}^jv^k, \qquad \vlift{X_i}(v^j)=\delta_i^j.
\]
In terms of coordinates $x^i$ on $Q$ we may write
\[
\clift{X_i}=X_i^j\vf{x^j}-R_{ik}^jv^k\vf{v^j},\qquad
\vlift{X_i}=\vf{v^i},
\]
where $X_i=X_i^j\partial/\partial x^j$.  The first term in the
expression for $\clift{X_i}$ is formally the same as $X_i$ itself, but
is of course a local vector field on $TQ$ rather than a vector field
on $Q$; we shall continue to denote it by $X_i$, though this is
strictly speaking an abuse of notation.  With this understood, the
Euler-Lagrange equations may be written in Hamel form,
\[
\Gamma\left(\fpd{L}{v^i}\right)-X_i(L)+R^j_{ik}v^k\fpd{L}{v^j}=0
\]
(as in e.g.\ \cite{AMZ}), which is the form we had announced in our
Introduction.

\subsection{The Lagrange-Poincar\'{e} equations}

We take the frame $\{X_i\}$ on $Q$ to be invariant under the action of
$G$, and to be of the form $\{X_r,X_I\}$ where the $X_r$ are vertical
and such that their values at any point $q$ form a basis for the vertical
vectors at $q$.  Then the $X_I$ are invariant and transverse to the
fibres of $Q\to Q/G$, and may be considered as the horizontal lifts of
their projections $Y_I$ to $Q/G$, with respect to some principal
connection $\omega$; note that $\{Y_I\}$ is a frame for $Q/G$, in
general anholonomic.

Let $\{E_r\}$ be a basis of $\g$ and let $\tilde{E}_r$ be the
fundamental vector fields of the action corresponding to this basis.
We have $[\tilde{E}_r,\tilde{E}_s]=-C_{rs}^t\tilde{E}_t$ where the
coefficients $C_{rs}^t$ are the structure constants of $\g$ with
respect to the given basis.  A vector field on $Q$ is invariant if and
only if all $[X,{\tilde E}_r] = 0$.

The vector fields $X_r$ could be obtained by taking a local section of
$Q\to Q/G$, choosing a basis of vertical vectors at each point of the
section varying smoothly over it, and using the $G$-action to define
the vector fields along the fibres.  If one chooses the initial values
of the $X_r$ to be ${\tilde E}_r$ we obtain the vector fields ${\hat
E}_r$ that we have used in previous publications
\cite{rraspects,invlag}.  However, in view of what we need for the
next section on constrained systems, it will be convenient to work in
greater generality already in this part of the section.

We may write $X_r=X_r^s\tilde{E}_s$ where the coefficient matrix is
nonsingular.  We have
\[
[\tilde{E}_r,X_s]=[\tilde{E}_r,X_s^t\tilde{E}_t]=
(\tilde{E}_r(X_s^t)-C_{ru}^tX_s^u)\tilde{E}_t,
\]
and so the necessary and sufficient condition for the $X_r$ to be
invariant is that the coefficients $X_s^t$ satisfy
\[
\tilde{E}_r(X_s^t)=C_{ru}^tX_s^u.
\]
It follows immediately that $X_r(X_s^t)=C_{uv}^tX_r^uX_s^v$, whence
\[
[X_r,X_s]=[X_r,X_s^t\tilde{E}_t]=C_{uv}^tX_r^uX_s^v\tilde{E}_t
=(\bar{X}^t_wC_{uv}^wX_r^uX_s^v)X_t
\]
where the overbar indicates the matrix inverse.  Since $X_r$, $X_s$
and $X_t$ are all invariant, so must the coefficient be.  We set
$\bar{C}^t_{rs}=\bar{X}^t_wC_{uv}^wX_r^uX_s^v$.  Then each
$\bar{C}^t_{rs}$ may be treated as a function on $Q/G$, and the
collection of such functions may be regarded as the structure
constants of $\g$, though expressed in terms of the $X_r$.  (If
$X_r=\hat{E}_r$ then $\bar{C}^t_{rs}=C_{rs}^t$.)

The equation $\tilde{E}_r(X_s^t)=C_{ru}^tX_s^u$ expresses the fact
that the $\g$-valued function $X_s^tE_t=\xi_s$ on $Q$ is
$G$-equivariant with respect to the adjoint action, and therefore
corresponds to a section of the adjoint bundle $\bar{\g}\to Q/G$; thus
the $\xi_r$ together form a local basis of sections of the adjoint
bundle.  Now
\[
X_I(\xi_r)=X_I(X_r^s)E_s=\bar{X}_t^sX_I(X_r^t)\xi_s;
\]
for convenience we shall write $\Upsilon_{Ir}^s$ for
$\bar{X}_t^sX_I(X_r^t)$.  We know that the functions $\Upsilon_{Ir}^s$
are $G$-invariant, and may therefore be regarded as functions on
$Q/G$.  Now $X_I$ is the horizontal lift of the vector
field $Y_I$ on $Q/G$ to $Q$, so we have
\[
\Upsilon_{Ir}^s\xi_s=\hlift{Y_I}(\xi_r).
\]
This means that the $\Upsilon_{Ir}^s$ are the connection coefficients,
with respect to the local basis $\{Y_I\}$ of vector fields on $Q/G$
and the local basis $\{\xi_r\}$ of sections of $\bar{\g}\to Q/G$, of
the connection induced by $\omega$ on the adjoint bundle (see e.g.\
\cite{rraspects} for more details).

Since the elements of the frame $\{X_i\}$ are invariant, so are their
brackets, and so is each $R_{ij}^k$; it may therefore be regarded as a
function on $Q/G$.  The following facts about the $R_{ij}^k$ are
important.
\begin{itemize}
\item The $R_{IJ}^K$ constitute the object of anholonomity of the
frame $\{Y_I\}$.  \item The vertical component of $[X_I,X_J]$, which
is $R_{IJ}^rX_r$, is closely related to the curvature of the
connection $\omega$:\ in fact the curvature, as a $\g$-valued
function, is $-R_{IJ}^sX^r_sE_r$.  We write $-K_{IJ}^r$ for $R_{IJ}^r$
as a reminder of this fact.
\item Since $[X_i,X_r]$ is always vertical, $R^I_{ir}=0$.  \item
$R_{Ir}^s=\Upsilon_{Ir}^s$.  \item $R_{rs}^t=\bar{C}_{rs}^t$.
\end{itemize}

We let $(v^r,v^I)$ be the the quasi-velocities corresponding to the
frame $\{X_r,X_I\}$; thus for $v\in T_qQ$, $v=v^rX_r(q)+v^IX_I(q)$.
But then $\pi_*v\in T_{\pi(q)}(Q/G)$ is given by
\[
\pi_*v=v^I\pi_*X_I(q)=v^IY_I(\pi(q)).
\]
From the invariance of the frame $\{X_i\}$ we conclude that the $v^i$
are invariant, and therefore constitute fibre coordinates on $TQ/G\to
Q/G$.  Since $\pi_*:T_qQ\to T_{\pi(q)}(Q/G)$ is surjective, we can
identify $v^I$ with $\pi^*w^I$, where the $w^I$ are the
quasi-velocities of the frame $\{Y_I\}$.

We now consider the Euler-Lagrange equation for momentum,
$\Gamma(\vlift{X_r}(L))-\clift{X_r}(L)=0$.  The components of momentum
are of course given by $p_r=\vlift{\tilde{E}_r}(L)$.  But we are
working in terms of the invariant basis $\{X_r\}$.  Let us set
$P_r=\vlift{X_r}(L)=X_r^sp_s=\langle \xi_r,p\rangle$; then
$P_r$ is invariant.  The Euler-Lagrange equation becomes
\[
\Gamma(P_r)=\dot{X}_r^sp_s=\bar{X}^s_t\dot{X}_r^tP_s,
\]
which is the component form of the momentum equation given in
Section~\ref{gen}, and is usually referred to as the momentum equation
in a moving basis \cite{Bloch,BKMM}.  Now in terms of quasi-velocities
\[
\frac{d}{dt}=v^iX_i=v^IX_I+v^rX_r,
\]
whence
\[
\bar{X}^s_t\dot{X}_r^t=\bar{X}^s_t(v^IX_I(X^t_r)+v^uX_u(X^t_r))
=\Upsilon_{Ir}^sv^I-\bar{C}_{rt}^sv^t,
\]
and we have
\[
\Gamma(P_r)=(\Upsilon_{Ir}^sv^I-\bar{C}_{rt}^sv^t)P_s.
\]

Taking account of the known facts about the $R_{ij}^k$, together with
the invariance of the frame, we have 
\begin{align*}
\clift{\check{X}_r}&=
-R^s_{ri}v^i\vf{v^s}=\left(\Upsilon_{Ir}^sv^I-\bar{C}^s_{rt}v^t\right)\vf{v^s}\\
\vlift{\check{X}_r} &=\vf{v^r}\\
\clift{\check{X}_I} &=
Y_I - R^K_{IJ}v^J\vf{v^K} -R^r_{Ij}v^j\vf{v^r}\\
&=Y_I - R^K_{IJ}v^J\vf{v^K} +
\left(K^r_{IJ}v^J-\Upsilon_{Is}^rv^s\right)\vf{v^r}\\
\vlift{\check{X}_I}  &= \vf{v^I}.
\end{align*}
By substituting these expression in the reduced equations
$\check{\Gamma}(\vlift{\check{X}_i}(l))-\clift{\check{X}_i}(l)=0$
(with $i=r$ and $i=I$ successively) we get:

\begin{prop}
The Lagrange-Poincar\'e equations are given by
\begin{align*}
&\check{\Gamma}\left(\fpd{l}{v^r}\right) =
\left(\Upsilon_{Ir}^sv^I-\bar{C}^s_{rt}v^t\right)\fpd{l}{v^s}\\
&\check{\Gamma}\left(\fpd{l}{v^I}\right) -Y_I(l)
+R^K_{IJ}v^J\fpd{l}{v^K}=
\left(K^r_{IJ}v^J-\Upsilon_{Is}^rv^s\right)\fpd{l}{v^r}.
\end{align*}
\end{prop}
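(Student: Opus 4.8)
The plan is to reduce the proof to a substitution into the intrinsic reduced equations already obtained in Section~\ref{gen}. Written for the invariant frame, those equations read $\check\Gamma(\vlift{\check X_i}(l))-\clift{\check X_i}(l)=0$, so I would simply take $i=r$ and $i=I$ in turn after inserting the coordinate expressions for the four reduced lifts. The substance of the argument is therefore the derivation of those lift expressions from the general anholonomic-frame formulae $\clift{X_i}=X_i-R^j_{ik}v^k\,\vf{v^j}$ and $\vlift{X_i}=\vf{v^i}$ recalled at the start of this section, by pushing each (invariant) lift down to $TQ/G$.

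The vertical lifts are immediate: $\vlift{X_r}=\vf{v^r}$ and $\vlift{X_I}=\vf{v^I}$ involve only the invariant quasi-velocities, which descend to fibre coordinates on $TQ/G\to Q/G$, so $\vlift{\check X_r}=\vf{v^r}$ and $\vlift{\check X_I}=\vf{v^I}$. For the complete lifts I would split the velocity part $-R^j_{ik}v^k\,\vf{v^j}$ according to whether each index is vertical or horizontal and evaluate the surviving coefficients with the tabulated identities $R^I_{ir}=0$, $R^s_{Ir}=\Upsilon^s_{Ir}$, $R^t_{rs}=\bar{C}^t_{rs}$, $R^r_{IJ}=-K^r_{IJ}$, together with the antisymmetry $R^k_{ij}=-R^k_{ji}$. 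The position parts are controlled by $\tau_*\clift{X_i}=X_i$ combined with $\pi_*X_r=0$ and $\pi_*X_I=Y_I$: since $X_r$ is $\pi$-vertical, $\clift{\check X_r}$ has no base component, and because $R^K_{ri}=0$ for every $i$ its fibre part collapses to the $\vf{v^s}$ terms alone, giving $\clift{\check X_r}=(\Upsilon^s_{Ir}v^I-\bar{C}^s_{rt}v^t)\,\vf{v^s}$; for $X_I$ the base part becomes $Y_I$ and the fibre part, after using $R^K_{It}=0$ and $R^r_{It}=\Upsilon^r_{It}$, reproduces the displayed expression for $\clift{\check X_I}$.

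With the four lifts in hand the proposition follows by direct substitution. Taking $i=r$ gives $\check\Gamma(\partial l/\partial v^r)=(\Upsilon^s_{Ir}v^I-\bar{C}^s_{rt}v^t)\,\partial l/\partial v^s$, which is the first stated equation. Taking $i=I$, the term $\clift{\check X_I}(l)$ equals $Y_I(l)-R^K_{IJ}v^J\,\partial l/\partial v^K+(K^r_{IJ}v^J-\Upsilon^r_{Is}v^s)\,\partial l/\partial v^r$; moving the last group to the right-hand side and the remaining terms to the left yields the second equation.

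I expect the only real difficulty to be bookkeeping rather than anything conceptual: keeping the index ranges straight when separating $R^j_{ik}v^k$ into its vertical and horizontal pieces, and tracking the sign changes forced by the antisymmetry of $R^k_{ij}$, for instance when rewriting $R^s_{rI}$ as $-\Upsilon^s_{Ir}$. The single point worth stating explicitly is that the position part of $\clift{X_r}$ drops out under reduction, which is precisely the assertion $\pi_*X_r=0$ that $X_r$ is vertical over $Q/G$.
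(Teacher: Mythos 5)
Your proposal is correct and follows essentially the same route as the paper: the paper likewise computes the reduced lifts $\vlift{\check{X}_r}$, $\vlift{\check{X}_I}$, $\clift{\check{X}_r}$, $\clift{\check{X}_I}$ in the quasi-velocity coordinates using the tabulated identities $R^I_{ir}=0$, $R^s_{Ir}=\Upsilon^s_{Ir}$, $R^t_{rs}=\bar{C}^t_{rs}$, $R^r_{IJ}=-K^r_{IJ}$ (with the antisymmetry of $R^k_{ij}$ and the verticality of $X_r$ handled exactly as you describe), and then substitutes them into $\check{\Gamma}(\vlift{\check{X}_i}(l))-\clift{\check{X}_i}(l)=0$ with $i=r$ and $i=I$ in turn.
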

The first of these is of course just the reduced form of the momentum
equation given earlier.

If we take the $Y_I$ to be coordinate fields the horizontal equation
takes on a somewhat more familiar appearance:
\[
\check{\Gamma}\left(\fpd{l}{v^I}\right) -\fpd{l}{x^I}=
\left(K^r_{IJ}v^J-\Upsilon_{Is}^rv^s\right)\fpd{l}{v^r};
\]
now the $v^I$ are effectively the standard fibre coordinates on
$T(Q/G)$ (quasi no longer).

Finally, we reconcile this version of the horizontal
Lagrange-Poincar\'{e} equation (for any frame $\{Y_I\}$ on $Q/G$) with
the more abstract one given earlier,
\[
\check\Gamma(\langle \vlift{Y},\hchecklift{d}l\rangle)-
\langle \clift{Y},\hchecklift{d}l\rangle=0,
\]
by computing $\hchecklift{d}l$ in terms of the frame $\{Y_I\}$.  Let
$\{\vartheta^I\}$ be the basis of 1-forms on $Q/G$ dual to the $Y_I$,
and $v^I$ the quasi-velocities.  Then $\{\vartheta^I,dv^I\}$ is a
basis of 1-forms on $T(Q/G)$ (we haven't distinguished notationally
between 1-forms on $Q/G$ and their pullbacks to $T(Q/G)$).  Note that
\[
\langle \clift{Y_I},\vartheta^J\rangle=\delta^J_I,\quad
\langle \clift{Y_I},dv^J\rangle=-R^j_{IK}v^K,\quad
\langle \vlift{Y_I},\vartheta^J\rangle=0,\quad
\langle \vlift{Y_I},dv^J\rangle=\delta^J_I.
\]
Now $X_I=\hlift{Y_I}$, from which it follows that
\begin{align*}
\hchecklift{(\clift{Y_I})}&=\clift{\check{X}_I}=
Y_I - R^K_{IJ}v^J\vf{v^K} +
\left(K^r_{IJ}v^J-\Upsilon_{Is}^rv^s\right)\vf{v^r}\\
\hchecklift{(\vlift{Y_I})}&=\vlift{\check{X}_I}=\vf{v^I}.
\end{align*}
Recall that $\hchecklift{d}l$ is a 1-form along the projection
$(TQ)/G\to T(Q/G)$, which means that it may be expressed as a linear
combination of the forms $\{\vartheta^I,dv^I\}$ with coefficients
which are functions on $(TQ)/G$.  Using the expressions above for
$\hchecklift{(\clift{Y_I})}$ and $\hchecklift{(\vlift{Y_I})}$ we
obtain
\[
\hchecklift{d}l=\left(Y_I(l)+
\left(K^r_{IJ}v^J-\Upsilon_{Is}^rv^s\right)\fpd{l}{v^r}\right)\vartheta^I
+\fpd{l}{v^I}dv^I,
\]
which leads to the expressions for the reduced equations given
above. 

\subsection{The Lagrange-d'Alembert-Poincar\'{e} equations}

We now choose the anholonomic frame $\{X_i\}$ in the form
$\{X_\alpha,X_a\}$ where $\{X_\alpha\}$ is a local basis of the
distribution $\D$.  We write the quasi-velocities as $(v^\alpha,v^a)$.
The constraint submanifold $\C$ is then simply given by $v^a=0$.  The
Lagrange-d'Alembert equations are
$\Gamma(\vlift{X_\alpha}(L))-\clift{X_\alpha}(L)=0$ on $\C$.
Recall that $\Gamma$ represents here a vector field on $\C$ of
second-order type, which means that it is of the form
\[
\Gamma = v^\alpha \clift{X_\alpha} + f^\alpha\vlift{X_\alpha}.
\]
The Lagrange-d'Alembert equations become
\[
\Gamma\left(\fpd{L}{v^\alpha}\right)
-X_\alpha(L)+R^i_{\alpha\beta}v^\beta\fpd{L}{v^i}=0
\]
in Hamel form.  It is sometimes considered preferable to separate out
those terms which involve differentiation along $\C$ from those which
involve differentiation transverse to it; in the former we can replace
$L$ by $L_c$, the constrained Lagrangian, in other words the
restriction of $L$ to $\C$. We obtain
\[
\Gamma\left(\fpd{L_c}{v^\alpha}\right)
-X_\alpha(L_c)+R^\beta_{\alpha\gamma}v^\gamma\fpd{L_c}{v^\beta}=
-\left.R^a_{\alpha\beta}v^\beta\fpd{L}{v^a}\right|_{\C}.
\]

To obtain reduced equations for an invariant constrained system we
need an adapted frame $\{X_i\}=\{X_{\alpha},X_{a}\}$ which is
invariant, as before.  The basis $\{X_\alpha\}$ of $\D$, in turn, is
of the form $\{X_\rho,X_\kappa\}$ where $\{X_\rho\}$ is a basis for
$\S$.  The set $\{X_a\}$ takes the form $\{X_c,X_k\}$ where the $X_c$
are vertical.  The collection $\{X_\rho,X_c\}$ is a basis $\{X_r\}$ of
the vertical vector fields; in general we can no longer take
$\hat{E}_r$ for $X_r$.  The collection $\{X_\kappa,X_k\}=\{X_I\}$ is
transverse to the fibres of $Q\to Q/G$ and is invariant, so can be
taken to be the horizontal lifts of their projections $Y_I$ to $Q/G$
with respect to some suitable principal connection $\omega$.  The
vector fields $Y_\kappa$ form a basis for $\bar{\D}$.

The corresponding quasi-velocities are $(v^\alpha,v^a)$ or
$(v^\rho,v^\kappa,v^c,v^k)$; the constraint submanifold $\C$ is given by
$v^a=0$, and $(v^\kappa,v^k)$ are quasi-velocities on $Q/G$
corresponding to the frame $\{Y_I\}=\{Y_\kappa,Y_k\}$, with $v^k=0$
defining the constraint submanifold $\bar{\C}$.

The Lagrange-d'Alembert equations reduce, taking $\alpha=\rho$ and
$\alpha=\kappa$ in turn, to the following pair of equations on $\C/G$:
\begin{align*}
&\check{\Gamma}\left(\fpd{l}{v^\rho}\right) =
\left(\Upsilon_{\kappa\rho}^rv^\kappa-\bar{C}^r_{\rho\sigma}v^\sigma\right)
\fpd{l}{v^r}\\
&\check{\Gamma}\left(\fpd{l}{v^\kappa}\right) -Y_\kappa(l)
+R^I_{\kappa\lambda}v^\lambda\fpd{l}{v^I}=
\left(K^r_{\kappa\lambda}v^\lambda-\Upsilon_{\kappa\rho}^rv^\rho\right)
\fpd{l}{v^r}.
\end{align*}
Now $L$ and $\C$ are both invariant under $G$, and so the constrained
Lagrangian $L_c$ is invariant under $G$, and defines a function $l_c$
on $\C/G$, which coincides with the restriction of $l$ (a function on
$(TQ)/G$).  The function $l_c$ is called the constrained reduced
Lagrangian (but might just as well be called the reduced constrained
Lagrangian).  We can use this to rewrite the reduced equations.
\begin{prop}
The
Lagrange-d'Alembert-Poincar\'{e} equations are given by
\begin{align*}
\check{\Gamma}\left(\fpd{l_c}{v^\rho}\right) &=
\left(\Upsilon_{\kappa\rho}^rv^\kappa-\bar{C}^r_{\rho\sigma}v^\sigma\right)
\left.\fpd{l}{v^r}\right|_{\C/G}\\
\check{\Gamma}\left(\fpd{l_c}{v^\kappa}\right) &-Y_\kappa(l_c)
+R^\lambda_{\kappa\mu}v^\mu\fpd{l_c}{v^\lambda}\\
&=-R^k_{\kappa\lambda}v^\lambda\left.\fpd{l}{v^k}\right|_{\C/G}
+\left(K^r_{\kappa\lambda}v^\lambda-\Upsilon_{\kappa\rho}^rv^\rho\right)
\left.\fpd{l}{v^r}\right|_{\C/G}.
\end{align*}
\end{prop}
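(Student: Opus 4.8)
The plan is to read off the two displayed equations as the specialisations $\alpha=\rho$ and $\alpha=\kappa$ of the reduced Lagrange-d'Alembert equation of Section~\ref{nonholgen},
\[
\check\Gamma(\vlift{\check{X}_\alpha}(l_c))-\clift{\check{X}_\alpha}(l)=0
\]
on $\C/G$, applied to the two families of $\D$-frame elements: $X_\rho$, a basis of $\S$, and $X_\kappa$, completing it to a basis of $\D$. The feature to keep in view throughout is the asymmetry of the two terms. Because $\rho$ and $\kappa$ are $\D$-indices, $\vlift{X_\rho}$ and $\vlift{X_\kappa}$ are tangent to $\C$, so the first term descends with $l_c$; the complete lifts $\clift{X_\rho}$ and $\clift{X_\kappa}$ are not tangent to $\C$, so the second term must at first be carried in terms of $l$.

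First I would substitute the explicit reduced frame fields $\clift{\check{X}_r}$, $\vlift{\check{X}_r}$, $\clift{\check{X}_I}$, $\vlift{\check{X}_I}$ already obtained in the Lagrange-Poincar\'e subsection; these remain valid here, their derivation using only the vertical/horizontal splitting $\{X_r\}=\{X_\rho,X_c\}$, $\{X_I\}=\{X_\kappa,X_k\}$ and not the finer adaptation to $\D$. With $\vlift{\check{X}_\rho}=\vf{v^\rho}$, $\clift{\check{X}_\rho}=(\Upsilon_{I\rho}^sv^I-\bar{C}^s_{\rho t}v^t)\vf{v^s}$, and the displayed form of $\clift{\check{X}_\kappa}$, the cases $\alpha=\rho$ and $\alpha=\kappa$ give two equations; restricting their quasi-velocity coefficients to $\C/G=\{v^c=v^k=0\}$, so that each sum collapses ($v^I\to v^\kappa$, $v^J\to v^\lambda$, $v^t\to v^\sigma$, $v^s\to v^\rho$), yields exactly the intermediate pair recorded just above the statement, still carried in terms of $l$.

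The decisive step is then to re-express these through $l_c$. Since $l_c=l|_{\C/G}$ and the directions $\vf{v^\rho}$, $\vf{v^\kappa}$, $\vf{v^\lambda}$ together with the base field $Y_\kappa$ are all tangent to $\C/G$, differentiation in them commutes with restriction: $\partial l/\partial v^\rho$, $\partial l/\partial v^\kappa$, $\partial l/\partial v^\lambda$ and $Y_\kappa(l)$ restrict to the corresponding derivatives of $l_c$ and to $Y_\kappa(l_c)$. By contrast the transverse derivatives $\partial l/\partial v^c$ and $\partial l/\partial v^k$ are not derivatives of $l_c$ and must be kept as restricted quantities; accordingly the momentum-type right-hand sides are written uniformly as $(\partial l/\partial v^r)|_{\C/G}$ with $r$ ranging over all vertical indices, even though the $\S$-components happen to equal $\partial l_c/\partial v^\rho$. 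Finally, splitting the horizontal term $R^I_{\kappa\lambda}v^\lambda\,\partial l/\partial v^I$ by its summation index into the $\bar{\D}$-part $R^\lambda_{\kappa\mu}v^\mu\,\partial l_c/\partial v^\lambda$, which passes to the left, and the transverse part $-R^k_{\kappa\lambda}v^\lambda\,(\partial l/\partial v^k)|_{\C/G}$, which remains on the right, reproduces the two displayed equations.

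The only genuine obstacle is the bookkeeping in this last step: one must track exactly which derivatives survive restriction as derivatives of $l_c$ and which must remain as transverse derivatives of $l$, and how the index ranges of the various sums contract once $v^c=v^k=0$ is imposed. There is no analytic difficulty. Conceptually the two resulting equations are the momentum (vertical, $\bar{\g}^\D$-valued) and horizontal ($\bar{\D}$-valued) halves dictated by the modified Atiyah sequence of Proposition~\ref{Atiyahprop}, which is what makes the splitting canonical rather than a mere artefact of the chosen frame.
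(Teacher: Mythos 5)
Your proposal is correct and follows essentially the same route as the paper: specialise the reduced Lagrange-d'Alembert equation $\check\Gamma(\vlift{\check{X}_\alpha}(l_c))-\clift{\check{X}_\alpha}(l)=0$ to $\alpha=\rho$ and $\alpha=\kappa$ using the reduced frame-field expressions from the Lagrange-Poincar\'e subsection, collapse the sums on $\C/G$ where $v^c=v^k=0$, and then replace $l$ by $l_c$ exactly in the tangent directions while keeping the transverse derivatives $(\partial l/\partial v^c,\partial l/\partial v^k)$ as restricted quantities, splitting $R^I_{\kappa\lambda}v^\lambda\,\partial l/\partial v^I$ accordingly. Your bookkeeping of which derivatives survive restriction as derivatives of $l_c$ is precisely the step the paper performs (tersely) between its intermediate pair of equations and the stated proposition.
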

The first of these is the reduced momentum equation.  The restriction
of the momentum $p$ to $\g^\D$, that is, the map $p^\D:TQ\to(\g^\D)^*$
given by
\[
\langle \xi,p^{\D}(q,u)\rangle=\vlift{\tilde{\xi}_q}(L)(q,u)
\quad\mbox{for $\xi\in\g^q$},
\]
is sometimes called the nonholonomic momentum map.  Its components are
the $G$-invariant functions
$P_\rho=\langle\xi_\rho,p\rangle=\vlift{X_\rho}(L)$, where
$\xi_\rho=X_\rho^rE_r$ defines a section of $\bar{\g}^\D\to Q/G$.  The
functions $P_\rho$ satisfy
$\Gamma(P_\rho)=\bar{X}^r_s\dot{X}_\rho^sP_r$ on $\C$, and this
reduces to the Lagrange-d'Alembert-Poincar\'{e} equation for momentum
given above.

We next consider some special cases of Lagrange-Poincar\'{e}-type
reduction of nonholonomic systems.

In case $T_qQ = \D_q + \V_q$ (i.e.\ when the so-called `dimension
assumption' is satisfied), the space $\bar{\C}$ is the whole of
$T(Q/G)$.  Furthermore, we can replace the $Y_I$ with coordinate
fields $\partial/\partial x^I$ on $Q/G$, and the horizontal reduced
equation becomes
\[
\check{\Gamma}\left(\fpd{l_c}{v^I}\right) -\fpd{l_c}{x^I}=
\left(K^r_{IJ}v^J-\Upsilon_{I\rho}^rv^\rho\right)
\left.\fpd{l}{v^r}\right|_{\C/G}.
\]

In case $\S_q=\{0\}$ the constraints are said to be purely kinematic
in \cite{Cortes}.  In this case there is no momentum equation.

Chaplygin systems (see e.g.\ \cite{Frans,Koiller}) are systems which
have both of the above properties.  There is no momentum equation, and
$\D$ is now the horizontal distribution $\H$ of a principal
connection.  We can therefore identify $\C/G$ with $T(Q/G)$.  The
reduced vector field is now of the form
\[
\check\Gamma = v^I\vf{x^I} + \Gamma^I\vf{v^I},
\]
i.e.\ it is a (true) second-order differential equation field on $Q/G$, and its
coefficients $\Gamma^I$ can be determined from the equations
\[
\check\Gamma\left(\fpd{l_c}{v^I}\right)-\fpd{l_c}{x^I} =
\left.K^r_{IJ}v^J\fpd{l}{v^r}\right|_{T(Q/G)}.
\]
These equations are of the form of Euler-Lagrange equations subjected
to an external force of gyroscopic type.  See e.g.\ \cite{nonholvak}
for more details on this case, in the framework of anholonomic frames.

In case $\D\subset\V$ there is no horizontal equation, and the
momentum equation is just
\[
\check{\Gamma}\left(\fpd{l_c}{v^\rho}\right) =
-\bar{C}^r_{\rho\sigma}v^\sigma\left.\fpd{l}{v^r}\right|_{\C/G}.
\]
One important special case occurs when the configuration space $Q$ is
a Lie group (that is, $Q=G$), and the constraints are linear; the
reduced equations are then called the Euler-Poincar\'{e}-Suslov
equations in e.g.\ \cite{AMZ,suslov}.

\section{Routh-type reduction for systems with horizontal symmetries}

We now consider the class of systems with a so-called group of
horizontal symmetries \cite{BKMM}.  For that case, one assumes that
there exists a subgroup $H \subset G$, the so-called group of
horizontal symmetries, such that $\tilde A \in\D$ for all $A\in\h$
and $\S_q=\D_q \cap \V_q = \V^H_q = \{ \tilde A(q) \,|\, A\in\h,
q\in Q \}$.  Because of the property $\g^{\psi_q(q)}=\ad(g^{-1})\g^q$
that we encountered in Section~\ref{inv}, we get that
$\h=\ad(g^{-1})\h$, meaning that $\h$ is necessarily an ideal (or that
$H$ is a normal subgroup).

For systems with the above properties, one can, of course, still use
the reduction procedure as described in the previous sections.  There
are, however, also other approaches to reduction.  For example, in
\cite{Cortes} it is shown that a version of Marsden-Weinstein
reduction can be applied to this case.  The goal of this section is to
show that one can easily stay on the `Lagrangian side' and recast
everything in terms of Routh reduction.  We will follow the geometric
approach to non-Abelian Routh reduction we have developed in
\cite{Routh}. (For a different approach see \cite{MRS}.)  As before,
the main observation is that all one needs to do is to choose an
appropriate frame.  Let's assume for simplicity that in this section
$\V_q +\D_q = T_qQ$.

Let $\{X_\kappa\}$ be the invariant vector fields we had before.  If
$\{E_r\}=\{E_\rho,E_c\}$ is a basis of $\g$ whose first members
$\{E_\rho\}$ span $\h$, we can use $\{X_\alpha\}=\{X_\kappa,{\tilde
E}_\rho\}$ as a (now not-invariant) anholonomic frame for $\D$, and
$\{X_\kappa,{\tilde E}_\rho,{\tilde E}_c\}$ as a complete basis of
vector fields on $Q$ (with corresponding quasi-velocities
$(v^\kappa,{\tilde v}^\rho,{\tilde v}^c)$.  Given that $\clift{{\tilde
E}_\rho}(L)=0$, the Lagrange-d'Alembert equation in the direction of
${\tilde E}_\rho$ now becomes the conservation law
\[
\vlift{\tilde E}_\rho(L) = \mu_\rho,
\]
where $\mu=\mu_\rho E^\rho\in\h^*$, where the $E^\rho$ are part of the
basis that is dual to $\{ E_\rho,E_c \}$. This represents a relation
on $\C$, not on the whole of $TQ$.

The remaining Lagrange-d'Alembert equations are of the form
\[
\Gamma(\vlift{X_\kappa}(L))-\clift{{X}_\kappa}(L) =0.  \] We will
restrict these equations to a fixed level set of momentum, from now on
denoted by $N_\mu$, and rewrite them in a form that contains only
vector fields that are tangent to $N_\mu$.  We will do so in two steps.
It is easy to see that the vector fields
\[
\clift{\bar X}_\kappa = \clift{X}_\kappa +
{\tilde R}^r_{\kappa\lambda}v^\lambda \vlift{\tilde E}_r
\qquad\mbox{and}\qquad \vlift{X}_\kappa
\]
are tangent to $\C$.  Here ${\tilde R}^r_{\kappa\lambda}$ stands for
the component of the bracket $[X_{\kappa},X_\lambda]$ along ${\tilde
E}_r$.  There is no contribution in ${\tilde v}^\rho$ since
$[X_{\kappa},{\tilde E}_\rho]=0$.  The equations then become
\[\Gamma(\vlift{X_\kappa}(L))-\clift{{\bar X}_\kappa}(L) =- {\tilde
R}^r_{\kappa\lambda}v^\lambda \vlift{\tilde E}_r(L)
\]
on $N_\mu$.

If we further assume the matrix $(g_{\rho\sigma}) = ( \vlift{\tilde
E}_\rho (\vlift{\tilde E}_\sigma(L)) )$ to be non-singular, the
momentum equations can be solved in the form ${\tilde v}^\rho =
\iota^\rho$, where $\iota^\rho$ are functions of the other
variables.  Moreover, under that assumption we can always find vector
fields $\clift{W}_\kappa$ and $\vlift{W}_\kappa$, with
\begin{align*}
\clift{W_\kappa}&=\clift{{\bar X}_\kappa}+A^\rho_\kappa\vlift{\tilde{E}_\rho}\\
\vlift{W_\kappa}&=\vlift{X_\kappa}+B^\rho_\kappa\vlift{\tilde{E}_\rho},
\end{align*}
which, as well as being tangent to $\C$, are also tangent to the level
set $N_\mu$ (the notation may again be a bit misleading since they
will not be complete or vertical lifts).  Let us denote $p_\rho =
\vlift{{\tilde E}_\rho}(L)$ and let us introduce the Routhian of $L$
as the function ${\mathcal R}=L-{\tilde v}^\rho p_\rho$.  Then, on the
level set (which is a part of $\C$)
\begin{align*}
\clift{W_\kappa} ({\mathcal R}) &= \clift{W_\kappa}(L)
- \clift{W_\kappa}({\tilde v}^\rho) p_\rho
= \clift{{\bar X}_\kappa}(L) + A^\rho_\kappa p_\rho
- \clift{{\bar X}_\kappa} ({\tilde v}^\rho) p_\rho - A^\rho_\kappa p_\rho \\
&= \clift{{\bar X}_\kappa}(L) - \clift{{\bar X}_\kappa}({\tilde v}^\rho)p_\rho =
\clift{{\bar X}_\kappa}(L) + {\tilde R}^\rho_{\kappa\beta}v^\beta \mu_\rho
- {\tilde R}^r_{\kappa\lambda}v^\lambda \delta^\rho_r \mu_\rho \\
&= \clift{{\bar X}_\kappa}(L) +
{\tilde R}^\rho_{\kappa\lambda}v^\lambda \mu_\rho
-{\tilde R}^\rho_{\kappa\lambda}v^\lambda \mu_\rho
= \clift{{\bar X}_\kappa}(L) \\
\vlift{W}_\kappa ({\mathcal R}) &= \vlift{W}_\kappa(L)
- \vlift{W}_\kappa({\tilde v}^\rho) p_\rho = \vlift{X}_\kappa(L)
+ B^\rho_\kappa p_\rho
-   B^\rho_\kappa p_\rho \\
&= \vlift{{X}_\kappa}(L).
\end{align*}
Here, ${\tilde R}^\rho_{\kappa\lambda}$ stands for the component of
$[X_\kappa,X_\lambda]$ along ${\tilde E}_\rho$.  We have also used
that $[X_\kappa,{\tilde E}_\rho] = 0$.

The vector fields $\clift{{\tilde E}_\rho}$ are tangent to $\C$.  We
can fix functions $C^\sigma_\rho$ such that the vector fields
\[
\clift{\bar{E}_\rho}=\clift{{\tilde E}_\rho}+
C^\sigma_\rho\vlift{\tilde{E}_\sigma}
\]
are tangent to $N_\mu$.

Since $\Gamma$ is tangent to $p_\rho=\mu_\rho$
its restriction to this level set is of the form
\[
\Gamma = v^\kappa \clift{W_\kappa} + \iota^\rho \clift{\bar{E}_\rho}
+(\Gamma^\kappa\circ\iota) \vlift{W_\kappa}.
\]
The coefficient $\Gamma^\kappa\circ\iota$ can be determined from
the remaining Lagrange-d'Alembert equations, which take the form \[
\Gamma(\vlift{W}_I({\mathcal R}^\mu))
-\clift{W_I}({\mathcal R}^\mu)
=-{\tilde R}^c_{\kappa\lambda}v^\lambda\vlift{{\tilde E}_c}(L)
- {\tilde R}^\rho_{\kappa\lambda}v^\lambda\mu_\rho
\]
on $N_\mu$.

We can now try to understand how to reduce this restriction of
$\Gamma$.  It is easy to see that the action of $G$ on $\C$ can be
restricted to an action of the isotropy group $H_\mu$ on the level set
$N_\mu$ in $\C$.  Indeed, we have
\[
0=A^\sigma\clift{\tilde E}_\sigma(\vlift{\tilde E}_\rho(L)) =
- A^\sigma C^\tau_{\sigma\rho} \vlift{\tilde E}_\tau(L)
= - A^\sigma C^\tau_{\sigma\rho} \mu_\tau
\]
if and only if $A=A^\sigma E_\sigma\in\h_\mu$.  We can therefore
reduce the above vector field to a vector field ${\check\Gamma}_1$ on
$N_\mu/H_\mu$.  This reduction method is the direct analogue of the
situation for standard Routh reduction (in the absence of
constraints).

But there is more.  Since we know that $H$ is normal in $G$, the level
set $\vlift{\tilde E}_\rho(L) = \mu_\rho$ has also the following
behaviour
\[
0=A^r\clift{\tilde E}_r(\vlift{\tilde E}_\rho(L)) =
- A^r C^s_{r\rho} \vlift{\tilde E}_s(L) =
- A^r C^\sigma_{r\rho} \vlift{\tilde E}_\sigma(L) =
- A^r C^\sigma_{r\rho} \mu_\sigma
\]
if and only if $A=A^r E_r \in\g_\mu$.
Therefore, the $G$-action on $\C$ restricts in fact to a $G_\mu$-action on the level
set $N_\mu$.  We are now in the situation of a $G$-invariant
vector field $\Gamma$ on a manifold $\C$, which we can restrict to a
$G_\mu$-invariant vector field on $N_\mu$ and which we can reduce to a
vector field ${\check\Gamma}_2$ on $N_\mu/G_\mu$.

The link with the vector field ${\check\Gamma}_1$ of the previous
paragraph is the following.  Instead of doing a direct reduction by
$G_\mu$, one can perform a reduction in two stages.  Indeed, it is
easy to define an action of $G_\mu/H_\mu$ on $N_\mu/H_\mu$ (see also
\cite{Cortes}).  The vector field ${\check\Gamma}_1$ will be invariant
under that action and we can therefore perform a second reduction by
means of its symmetry group $G_\mu/H_\mu$.

We will not write down explicit expressions for these reduced vector
fields and their corresponding differential equations.  Instead, we
will make the situation clear by means of a simple example.

{\bf Example.} Consider the system with $L=\onehalf({\dot x}^2+{\dot
y}^2+{\dot z}^2)$ on $\R^3$ with constraint $\dot{z}=x\dot x$ (a
variation on the theme of a nonholonomic particle).  This example is
taken from \cite{Cortes}, but we will rephrase it in our current
framework.  We have $\D={\rm span}\{ \partial/\partial x + x
\partial/\partial z, \partial/\partial y \}$ and, since the system is
invariant under the $\R^2$-action given by $(r,s) \times (x,y,z)
\mapsto (x,y+r,z+s) $, $\V = {\rm span} \{ \partial/\partial y,
\partial/\partial z \}$.  Therefore, $\S=\D \cap\V = {\rm span} \{
\partial/\partial y \}$.  This coincides with the case where
$H=\R\times\{0\}$, with action $(r,0) \times (x,y,z) \mapsto (x,y+r,z)
$.  Remark that $\D+\V=TQ$.  Quasi-velocities with respect to the
given frame are $v_x=\dot x$, $v_y=\dot y$ and $v_z=\dot z - x \dot
x$.  The vector field $X_\kappa=X=\partial/\partial x + x
\partial/\partial z$ is invariant under the $G$-action.

The preserved momentum is here
\[
\left(\fpd{}{y} \vlift{\right)}(L)=\dot y = \mu.
\]
The remaining equation on $\C$ is $\Gamma(\vlift{X}(L)) -\clift
X(L)=0$.  Since $\vlift X (v_z) = 0$ and $\clift X (v_z) = 0$, both
$\vlift X$ and $\clift X$ are tangent to the constraint, so we can
rewrite that equation as $\Gamma(\vlift{X}(L_c)) -\clift X(L_c)=0$,
with $L_c = \onehalf((1+x^2){\dot x}^2+{\dot y}^2)$.  One easily
verifies that this equation is equivalent with
\[
(1+x^2)\ddot x  - x {\dot x}^2=0.
\]
This equation is evidently $\R^2$-invariant and the reduced vector field is
\[
\check\Gamma = \dot x \fpd{}{x} + \frac{x {\dot x}^2}{1+x^2}\fpd{}{\dot x}.
\]

It is instructive to see how one gets the same result when we use
Routh reduction.  Remark that $H_\mu = \R\times {0}$ and $G_\mu=\R^2$.
Since also $\vlift X (\dot y) = 0$ and $\clift X (\dot y)=0$ the
vector fields $\vlift X=\vlift{\bar X}$ and $\clift X=\clift{\bar X}$
are already tangent to the level set $\dot y = \mu$.  We can therefore
simply re-write the remaining equation as
\[
\Gamma(\vlift{X}({\mathcal R}_\mu^c)) -\clift X({\mathcal R}_\mu^c)=0,
\] where ${\mathcal R}_\mu^c$ is the restriction of the Routhian
${\mathcal R}_\mu$ to the constraints and to the level set.  It is
given by
 \[
 {\mathcal R}_\mu^c = \onehalf ((1+x^2) {\dot x}^2 - \mu^2).
 \]
The Routh equation above is again $(1+x^2)\ddot x - x {\dot x}^2=0$.
We can now do a direct reduction by means of the largest group
$G_\mu$.  We see that $N_\mu/G_\mu = T\R$.  Due to the absence of
gyroscopic-type terms, the $G_\mu$-reduced version of the above
equation will be a genuine Euler-Lagrange equation, with the
$G_\mu$-reduced Routhian as its Lagrangian.  The vector fields $\vlift
X$ and $\clift X$ reduce to the vector fields $\partial/\partial \dot
x$ and $\partial/\partial x$ on $\R$ and the reduction actually
amounts to cancelling the cyclic variables $y$ and $z$ from the above
equation.  A similar reasoning holds for the reduction in two steps.
We have $H_\mu=\R$ and $G_\mu/H_\mu =\R$; the first reduction cancels
$y$ and the second cancels $z$.

\subsubsection*{Acknowledgements}
The first author is a Guest Professor at Ghent University:\ he is
grateful to the Department of Mathematics for its hospitality.  The
second author is a Postdoctoral Fellow of the Research Foundation --
Flanders (FWO).

\end{document}